\renewcommand{\phi}{\varphi}
\renewcommand{\epsilon}{\varepsilon}
\renewcommand{\theta}{\vartheta}
\def\ZZ{{\mathbf Z}}
\def\RR{{\mathbf R}}
\def\QQ{{\mathbf Q}}
\def\cI{\mathcal{I}}
\def\cJ{\mathcal{J}}
\def\cF{\mathcal{F}}
\def\cO{\mathcal{O}}
\def\cE{\mathcal{E}}
\def\fra{\mathfrak{a}}
\DeclareMathOperator{\ord}{ord}
\DeclareMathOperator{\NS}{N}
\DeclareMathOperator{\SB}{{\mathbf S}{\mathbf B}}
\DeclareMathOperator{\B}{{\mathbf B}}
\newtheorem{lemma}{Lemma}[section]
\newtheorem{theorem}[lemma]{Theorem}
\newtheorem{corollary}[lemma]{Corollary}
\newtheorem{proposition}[lemma]{Proposition}
\theoremstyle{definition}
\newtheorem{remark}[lemma]{Remark}
\theoremstyle{remark}
\newtheorem*{remark*}{Remark}
\newtheorem*{note*}{Note}
\numberwithin{equation}{section}
\begin{document}

\title[NUMERICAL DIMENSION IN POSITIVE CHARACTERISTIC]{On the numerical dimension of pseudo-effective divisors in positive characteristic}

\author[P.~Cascini]{Paolo~Cascini}
\address{Department of Mathematics, Imperial College London, London SW7 2AZ, UK}
\email{{p.cascini@imperial.ac.uk}}

\author[C.~Hacon]{Christopher~Hacon}
\address{Department of Mathematics, University of Utah, Salt Lake City, UT 84112, USA}
\email{hacon@math.utah.edu}

\author[M.~Musta\c{t}\u{a}]{Mircea~Musta\c{t}\u{a}}
\address{Department of Mathematics, University of Michigan,
Ann Arbor, MI 48109, USA}
\email{mmustata@umich.edu}

\author[K.~Schwede]{Karl~Schwede}
\address{Department of Mathematics, The Pennsylvania State University,
University Park, PA 16802, USA}
\email{schwede@math.psu.edu}

%\thanks{2010\,\emph{Mathematics Subject Classification}.
\subjclass[2010] {Primary 14E99; Secondary 13A35, 14F18.}
\thanks{Cascini was partially supported by EPSRC grant P28327, Hacon was partially supported by NSF grant DMS-0757897 and a grant from the Simons Foundation, Musta\c{t}\u{a}
was partially supported by
 NSF grant DMS-1068190, and Schwede was  partially supported by NSF grant DMS-1064485 and a Sloan Foundation Fellowship.}
\keywords{Pseudo-effective divisor, numerical dimension, Frobenius morphism.}

\begin{abstract}
Let $X$ be a smooth projective variety over an algebraically closed field of positive characteristic. We prove that if $D$ is a pseudo-effective $\RR$-divisor on $X$ which is not
numerically equivalent to the negative part in its divisorial Zariski decomposition, then the
numerical dimension of $D$ is positive. In characteristic zero, this was proved by Nakayama
using vanishing theorems.
\end{abstract}

\maketitle

%\markboth{P.~CASCINI, M.~MUSTA\c{T}\u{A}, AND K.~SCHWEDE}
%{NUMERICAL DIMENSION IN POSITIVE CHARACTERISTIC}

\section{Introduction}

Let $X$ be a smooth projective variety over an algebraically closed field $k$.
Recall that an $\RR$-divisor on $X$ is \emph{pseudo-effective} if its numerical class lies in the
closure of the set of classes of effective $\RR$-divisors on $X$. Following \cite{Nakayama},
we define the \emph{numerical dimension} $\kappa_{\sigma}(D)$ of an $\RR$-divisor $D$ as
the largest non-negative integer $\ell$ such that for some ample divisor $A$ one has
$$\liminf_{m\to\infty} \frac {h^0(X,\cO_X(\lfloor mD\rfloor+A))}{m^{\ell}}>0$$
(if there is no such $\ell\geq 0$, then $\kappa_{\sigma}(D)=-\infty$). In fact, $\kappa_{\sigma}(D)
\geq 0$ if and only if $D$ is pseudo-effective.
The following is the main result of this note:

\begin{theorem}\label{thm_main}
Suppose that $X$ is a smooth projective variety over an algebraically closed field $k$ of positive characteristic. If $D$ is a pseudo-effective $\RR$-divisor
on $X$ which is not numerically equivalent to the negative part $N_{\sigma}(D)$ in its divisorial Zariski
decomposition, then $\kappa_{\sigma}(D)\geq 1$, that is, there is an ample divisor
$A$ on $X$ and $C>0$ such that
$$h^0(X,\cO_X(\lfloor mD\rfloor+A))\geq Cm\,\,\text{for all}\,\,m\gg 0.$$
\end{theorem}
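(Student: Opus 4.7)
The plan is to reduce to an assertion about the positive part of the divisorial Zariski decomposition of $D$, and then to produce global sections by restricting to a general curve, with Frobenius replacing the Kawamata--Viehweg vanishing one would use in characteristic zero. Writing $D = P + N$ with $P = P_{\sigma}(D)$ and $N = N_{\sigma}(D)$ effective, multiplication by a section of $\cO_X(\lfloor mN \rfloor)$ gives an injection $H^0(X, \cO_X(\lfloor mP \rfloor + A)) \hookrightarrow H^0(X, \cO_X(\lfloor mD \rfloor + A))$ for every ample $A$ and every $m \geq 1$ (up to bounded integral round-off absorbable into $A$). Hence it suffices to find an ample $A$ and constant $C > 0$ with $h^0(X, \cO_X(\lfloor mP \rfloor + A)) \geq Cm$ for $m \gg 0$. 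Under the hypothesis of the theorem, $P$ is movable (all $\sigma_F(P) = 0$) and $P \not\equiv 0$ in $N^1(X)_{\RR}$.

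Since $P$ is movable and numerically nontrivial, one can arrange $P \cdot H^{n-1} > 0$ for some ample class $H$. For $k \gg 0$, cutting $X$ with $n-1$ general members of $|kH|$ yields a smooth irreducible projective curve $C \subset X$ with $P \cdot C > 0$, and Riemann--Roch on $C$ gives
\[
h^0\bigl(C, \cO_C((\lfloor mP \rfloor + A)|_C)\bigr) \geq m (P \cdot C) + O(1),
\]
so the target of the restriction map $H^0(X, \cO_X(\lfloor mP \rfloor + A)) \to H^0(C, \cdot)$ grows linearly in $m$. The theorem will follow once one shows that the image of this restriction also grows linearly in $m$.

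The main obstacle is precisely this lifting step. The obstruction lives in $H^1(X, \cO_X(\lfloor mP \rfloor + A - C))$, which in characteristic zero would be killed by Kawamata--Viehweg (or a multiplier-ideal refinement thereof), but in positive characteristic this vanishing typically fails. The plan is to replace it by Frobenius: choose $A$ so that $A - C - K_X$ is ample and dominates the ``Frobenius discrepancy'' determined by $p$, and apply iterated Frobenius push-forwards to the short exact sequence
\[
0 \to \cO_X(\lfloor mP \rfloor + A - C) \to \cO_X(\lfloor mP \rfloor + A) \to \cO_C\bigl((\lfloor mP \rfloor + A)|_C\bigr) \to 0.
\]
Using either a global $F$-regularity argument or the theory of asymptotic test ideals adapted to pseudo-effective $\RR$-divisors, one should show that the cokernel of restriction is bounded, yielding the required linear growth. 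The delicate point is uniformity in $m$: a single $A$ must work for every $m \gg 0$, which forces one to combine the pseudo-effectivity of $P$ with the Frobenius positivity of $A - C$ in a way that is independent of the floors $\lfloor mP \rfloor$. This uniform Frobenius lifting is where I expect the bulk of the technical work to lie.
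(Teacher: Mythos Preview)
Your overall strategy matches the paper's: reduce to $P=P_\sigma(D)$, cut down to a curve, use Riemann--Roch on the curve, and replace Kawamata--Viehweg by a Frobenius argument. You have also correctly located the crux of the matter, namely the uniformity-in-$m$ of the lifting step. What you are missing is the paper's device for dissolving that difficulty.

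The paper does \emph{not} fix a single curve $C$ and then fight for uniform lifting. Instead, for each $m$ it chooses a \emph{different} complete-intersection curve $W_m$, cut out by $n-1$ general members of a fixed very ample $|H|$. The point is that movability of $P$ (i.e.\ $\sigma_\Gamma(P)=0$ for all $\Gamma$) implies, via Proposition~\ref{prop_Nakayama}, that $\SB(\lfloor mP\rfloor+G)$ has no divisorial components for a suitably chosen ample $G$; hence a general curve $W_m$ in the fixed linear system misses this base locus entirely. Once $W_m\cap\SB(\lfloor mP\rfloor+G)=\emptyset$, the lifting becomes an \emph{actual surjection}
\[
H^0(X,\cO_X(\lfloor mP\rfloor+A))\twoheadrightarrow H^0(W_m,\cO_X(\lfloor mP\rfloor+A)\vert_{W_m}),
\]
not merely a map with linearly growing image. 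The uniformity in $m$ comes for free because all the $W_m$ have the same genus $g=\frac{1}{2}((K_X+(n-1)H)\cdot H^{n-1})+1$, so Riemann--Roch gives the same linear bound for each.

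The lifting mechanism itself is also more concrete than ``global $F$-regularity or asymptotic test ideals.'' The paper uses two ingredients: (i) an elementary Serre-type vanishing (Proposition~\ref{vanishing1} and Corollary~\ref{cor_vanishing}) on the normalized blow-up of the base ideal $\fra^{(m)}$ of $|r_m(\lfloor mP\rfloor+G)|$, which gives surjectivity of restriction for $p^e$-multiples with $e\gg 0$; and (ii) the iterated log trace map $t^e_{X,H}\colon F^e_*\omega_X(H)\to\omega_X(H)$, compatible with adjunction to $W_m$, together with Fujita vanishing on $W_m$ (Tango's bound $(G\cdot H^{n-1})>\frac{2g-2}{p}$ suffices) to descend from the $p^e$-level back down. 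Your proposed short exact sequence and its $H^1$ obstruction are never directly confronted; the argument routes around them via this Frobenius-trace commutative square.
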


The above theorem was proved by N.~Nakayama in \cite[Thm.~V.1.12]{Nakayama} in characteristic
zero, using the Kawamata-Viehweg vanishing theorem. That result turned out to be  useful in many situations
when dealing with pseudo-effective divisors. For example, it was used in the proof of the
non-vanishing theorem, an important ingredient in proving the finite generation of the canonical ring, see \cite[Lem.~6.1]{BCHM}. We note that a large part of the results in birational geometry in characteristic zero
rely on the use of vanishing theorems. The above result illustrates
our belief that in spite of the failure of vanishing theorems in positive characteristic,
several results can still be recovered by making systematic use of the Frobenius morphism.

Let us say a few words about the divisorial Zariski decomposition that appears in the above theorem; for details, see \S 2. If $D$ is a pseudo-effective divisor, then to every prime divisor
$\Gamma$ on $X$ one can associate
a non-negative real number $\sigma_{\Gamma}(D)$ that only depends on the numerical class of $D$. It is known that there are only
finitely many prime divisors $\Gamma$ with $\sigma_{\Gamma}(D)>0$.
The \emph{negative part
in the divisorial Zariski decomposition of $D$} is
$$N_{\sigma}(D):=\sum_{\Gamma}\sigma_{\Gamma}(D)\Gamma.$$

Given an arbitrary $\RR$-divisor $D$ on $X$, the
\emph{non-nef locus} of $D$ is the union
$${\mathbf B}_-(D):=\bigcup_A\SB(D+A),$$
where $A$ varies over the ample divisors $A$ on $X$ such that $D+A$ is a $\QQ$-divisor,
and $\SB(E)$ denotes the stable base locus of a $\QQ$-divisor $E$.
One can show that, in fact, ${\mathbf B}_-(D)$ is a countable union of Zariski closed subsets.
 Note that
${\mathbf B}_-(D)$ is empty if and only if $D$ is nef.
If we assume that
the ground field is uncountable, then ${\mathbf B}_-(D)=X$
if and only if $D$ is not pseudo-effective. Furthermore, in this case
a prime divisor $\Gamma$ is contained in ${\mathbf B}_-(D)$ if and only if
$\sigma_{\Gamma}(D)>0$.

The assertion in Theorem~\ref{thm_main} is proved by showing that after
replacing $D$ by $D-N_{\sigma}(D)$, we can find an ample divisor $A$ such that the sections
of $\cO_X(\lfloor mD\rfloor+A)$ can be lifted from suitable curves to $X$.
The following is another result in this direction.

\begin{theorem}\label{p_lifting}
Let $X$ be a smooth projective variety over an algebraically closed field of positive characteristic, and let $D$ be an $\RR$-divisor on $X$. Suppose that $H=H_1+\ldots+H_r$ is a simple normal crossing divisor on
$X$, with $r<\dim(X)$,
 and that
$W:=H_1\cap\dots\cap H_r$  does not intersect the non-nef locus $\mathbf B_-(D)$ of $D$.
In this case there exists an ample divisor $A$ on $X$ such that the restriction map
\begin{equation*}\label{0}
H^0(X,\cO_X(\lfloor mD\rfloor +A))\to H^0(W,\cO_X(\lfloor mD\rfloor +A)\vert_W)
\end{equation*}
is surjective for every $m\ge 1$.
\end{theorem}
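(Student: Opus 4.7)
The plan is to adapt Nakayama's characteristic-zero strategy, with the Kawamata--Viehweg vanishing that he uses replaced by a Frobenius-pushforward plus Fujita-vanishing argument. The hypothesis $W \cap \mathbf B_-(D) = \emptyset$ is first converted into an effective statement: by the definition of $\mathbf B_-$, there exist a small ample $\QQ$-divisor $A_0$, an integer $k \geq 1$ with $k(D+A_0)$ Cartier, and an effective divisor $E \sim k(D+A_0)$ whose support is disjoint from $W$. This effective representative is what plays the role of Nakayama's asymptotic multiplier ideal being trivial at $W$.

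Using the Koszul resolution of $\cO_W$ as an $\cO_X$-module, the surjectivity we seek reduces, via the usual spectral sequence, to the vanishings
\begin{equation*}
H^j\bigl(X, \cO_X(\lfloor mD\rfloor + A - H_{i_1} - \cdots - H_{i_j})\bigr) = 0, \qquad j \geq 1,\ \{i_1,\ldots,i_j\} \subseteq \{1,\ldots,r\},
\end{equation*}
holding for every $m \geq 1$ with a single choice of $A$. These will follow from a Frobenius + Fujita argument. Since $F^e$ is a finite map we have $H^j(X, L) = H^j(X, F^e_*L)$, and by the projection formula $F^e_*\cO_X(p^e M + R) \cong \cO_X(M) \otimes F^e_*\cO_X(R)$. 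Scaling $E$ to write $m(D + A_0) \sim_{\QQ} \tfrac{m}{k} E$ and substituting into $\lfloor mD\rfloor + A - \sum_{i \in I} H_i$, one can decompose the relevant line bundle as a twist of a Frobenius pushforward of a sheaf from a bounded family by a uniformly ample line bundle. Fujita vanishing for coherent sheaves (which is characteristic-free) then delivers the required higher-cohomology vanishing uniformly in $m$, provided $A$ was chosen ample enough to dominate all the bounded error terms.

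The SNC hypothesis on $H$ and the smoothness of $W = H_1 \cap \cdots \cap H_r$ ensure, via the adjunction $(K_X + H)|_W = K_W$, that the Frobenius trace and the projection-formula decomposition restrict compatibly to $W$. This compatibility is what allows one to conclude that the vanishings above translate into surjectivity of the original restriction map, rather than merely surjectivity of some auxiliary Frobenius-twisted map.

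The principal obstacle is achieving uniformity of the ample divisor $A$ in $m$. The effective divisor $E$ disjoint from $W$ is crucial here: it lets one absorb the growing $\lfloor mD\rfloor$-contribution into an effective divisor whose support misses $W$, so that the positive part of the cohomological computation can be controlled uniformly by the fixed ample divisor $A$ together with the Frobenius amplification. Without the hypothesis $W \cap \mathbf B_-(D) = \emptyset$, no such $E$ exists and the Frobenius + Fujita estimates cannot be made uniform in $m$, which is exactly where the hypothesis enters in an essential way.
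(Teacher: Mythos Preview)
Your proposal has two genuine gaps. First, the effective divisor $E\sim k(D+A_0)$ with support \emph{disjoint} from $W$ does not exist in general when $\dim W>0$: what the hypothesis $W\cap\mathbf B_-(D)=\emptyset$ gives is only that the \emph{base locus} of $|k(D+A_0)|$ misses $W$, so a general member of that linear system will still meet $W$ in a divisor on $W$. Second, and more seriously, the Koszul vanishings $H^j(X,\cO_X(\lfloor mD\rfloor+A-H_{i_1}-\cdots-H_{i_j}))=0$ you reduce to are vanishings on all of $X$, and there is no reason they should hold uniformly in $m$: away from $W$ the divisor $D$ can be arbitrarily negative, so $\lfloor mD\rfloor+A-H_I$ is not of the form (nef)$+$(bounded), and no Frobenius--Fujita decomposition of the sort you describe is available. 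Your final paragraph about the trace map and adjunction is a hint in the right direction, but it is inconsistent with the Koszul strategy: if those $H^j$ really vanished, surjectivity would be immediate and no trace argument would be needed.

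The paper proceeds quite differently. For fixed $m$ it takes the base ideal $\fra^{(m)}$ of a suitable multiple of $\lfloor mD\rfloor+G$; since this ideal is trivial along $W$, a Serre-type vanishing on the normalized blow-up (Proposition~\ref{vanishing1} and its corollary, applied with $\cE=\cI_W\otimes\omega_X(H)$) shows that the restriction map
\[
H^0\bigl(X,\omega_X(H)\otimes\cO_X(p^eE_m)\bigr)\longrightarrow H^0\bigl(W,\omega_W\otimes\cO_X(p^eE_m)\vert_W\bigr)
\]
is surjective for $e\gg 0$, where $E_m=\lfloor mD\rfloor+2G$. One then descends from level $p^e$ to level $1$ using the commutative square coming from the log trace $t^e_{X,H}$ and its restriction $t^e_W$; the surjectivity of the right vertical arrow (the trace on $W$) is exactly where Fujita vanishing is used, but on $W$, applied to the kernel sheaf $\cF=\ker(t_W)$ twisted by $\cO_X(G)\vert_W$ and a nef line bundle. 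The uniformity in $m$ comes from the fact that $\cO_X(\lfloor mD\rfloor+G)\vert_W$ is semiample (hence nef) for every $m$, so a single Fujita-type choice of $G$ works. In short: the paper never attempts the global Koszul vanishings you aim for; it proves surjectivity at high Frobenius power via the base ideal, and then uses the trace on $W$---not on $X$---to come back down.
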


For a version of this result in characteristic zero, see \cite[Prop.~V.1.14]{Nakayama}. The proofs
of Theorems~\ref{thm_main} and \ref{p_lifting} make essential use of the Frobenius morphism.

In any characteristic, there are several possible candidates for the notion of numerical dimension
of a pseudo-effective divisor. In characteristic zero, B.~Lehmann showed in
\cite{Lehmann} that all these definitions are equivalent. We expect that a similar result
holds also in positive characteristic, but we do not pursue this direction here.
We only show, using Theorem~\ref{p_lifting}, that in the case of a nef divisor $D$, the
numerical dimension of $D$ as defined above can also be described as the largest
integer $j\geq 0$ such that the cycle class $D^j$ is not numerically trivial (this is the
definition in \cite{Kawamata}).  This is done in Proposition \ref{prop.KawamataNumericalDimensionEquivalent}.

The paper is organized as follows. In the next section we collect some basic facts about
pseudo-effective divisors, non-nef loci, and divisorial Zariski decompositions.
\S 3 is devoted to proving a general vanishing result, valid in arbitrary characteristic, which
only uses
asymptotic Serre vanishing. This is then used in \S 4
to prove the two results stated above.

\subsection*{Acknowledgment}
We are grateful to J\'{a}nos Koll\'{a}r and Burt Totaro for comments on a preliminary version of the manuscript, and to
Rob Lazarsfeld for some helpful discussions.

\section{Review of basic invariants of pseudo-effective divisors}

We start by recalling some basic definitions and notation. We work over a fixed
algebraically closed field $k$. For now, we do not make any assumption on the characteristic of $k$. All varieties are assumed to be reduced and irreducible.
Let $X$ be an $n$-dimensional normal projective variety over $k$.
A divisor
($\QQ$-divisor, $\RR$-divisor) on $X$ is a linear combination with integer (respectively, rational
or real) coefficients of prime divisors. If $D=\sum_{i=1}^da_iD_i$ is an $\RR$-divisor on $X$
(with the $D_i$ distinct prime divisors), then we put
$\lfloor D\rfloor:=\sum_{i=1}^d\lfloor a_i\rfloor D_i$ and $\lceil D\rceil:=\sum_{i=1}^d\lceil a_i
\rceil D_i$, where for a real number $u$, we denote by $\lfloor u\rfloor$ and $\lceil u\rceil$
the largest (respectively, smallest) integer that is $\leq u$ (respectively, $\geq u$).
We denote by ${\rm Cart}(X)$ the group of Cartier divisors on $X$.
An $\RR$-Cartier $\RR$-divisor on $X$ is an element of ${\rm Cart}(X)\otimes_{\ZZ}\RR$.
We consider this real vector space as a subspace of the space of $\RR$-divisors.
We will mostly be concerned with the case when $X$ is smooth, when every $\RR$-divisor
is $\RR$-Cartier.

We denote by
$\NS^1(X)_{\RR}$ the quotient of ${\rm Pic}(X)\otimes_{\ZZ}\RR$ by the numerical equivalence
relation. This is a finite-dimensional real vector space.
 The cone of \emph{pseudo-effective} classes is the closure
of the cone generated by classes of line bundles $L$ with $h^0(X,L)\geq 1$.
Its interior is the cone of \emph{big} classes.

Suppose now that $X$ is a smooth projective variety.
If $D$ is a divisor, then we denote by ${\rm Bs}(|D|)$ the base-locus of the linear system
$|D|$, considered with the reduced scheme structure. If $D$ is a $\QQ$-divisor, then
the \emph{stable base locus} $\SB(D)$ is the intersection
$\bigcap_m{\rm Bs}(|mD|)$, where $m$ varies over the positive integers such that
$mD$ has integer coefficients. This is a Zariski closed subset, and it is a consequence
of the Noetherian property that $\SB(D)={\rm Bs}(|mD|)$ whenever $m$ is divisible enough.

The \emph{non-nef locus} of an $\RR$-divisor $D$ is defined as
$$\B_-(D):=\bigcup_A\SB(D+A),$$ where $A$ varies over the
ample $\RR$-divisors such that $D+A$ has $\QQ$-coefficients. It is easy to see
that if $(A_m)_{m\geq 1}$ is a sequence of ample $\RR$-divisors whose classes in $\NS^1(X)_{\RR}$ go to zero,
and such that $D+A_m$ is a $\QQ$-divisor for every $m$,
then $\B_-(D)=\bigcup_{m\geq 1}\SB(D+A_m)$. In particular,
$\B_-(D)$ is a countable union of Zariski closed subsets (it was recently shown by John Lesieutre that $\B_-(D)$ is not always closed \cite{Lesieutre}).

It follows from definition that $\B_-(D)$ only depends on the numerical equivalence
class of $D$. Furthermore,
$\B_-(D)$ is empty if and only if $D$ is nef; if $k$ is uncountable, then
$\B_-(D)$ is a proper subset of $X$ if and only if $D$ is pseudo-effective.
For these facts about the non-nef locus and for a more detailed discussion, see
\cite{ELMNP}.

We now recall the asymptotic order of vanishing function and the divisorial Zariski decomposition, both due to Nakayama \cite{Nakayama}. For the proofs of the results
that we list below we refer to \cite[Chap. III]{Nakayama} for the case ${\rm char}(k)=0$,
and to \cite{Mustata} for the case ${\rm char}(k)>0$. Let $\Gamma$ be a prime divisor on $X$.
Suppose first that $D$ is a big $\QQ$-divisor. Consider a positive integer $m$ such that $mD$ has integer coefficients and \mbox{$h^0(X,\cO_X(mD))>0$.} In this case we denote by
$\ord_{\Gamma}|mD|$ the coefficient of $\Gamma$ in a general element in $|mD|$.
One defines
$$\ord_{\Gamma}\parallel D\parallel:=\inf_m\frac{\ord_{\Gamma}|mD|}{m}=
\lim_{m\to\infty}\frac{\ord_{\Gamma}|mD|}{m},$$
where $m$ is as above.
One can show that $\ord_{\Gamma}\parallel D\parallel$ only depends on the numerical
equivalence class of $D$, and the map $D\to \ord_{\Gamma}\parallel D\parallel$ extends to a continuous
map (denoted in the same way) on the big cone. Furthermore, if $D$ is a pseudo-effective
$\RR$-divisor on $X$, we put
\begin{equation}\label{def_order}
\sigma_{\Gamma}(D):=
\sup_A\ord_{\Gamma}\parallel D+A\parallel,
\end{equation}
where $A$ varies over the ample $\RR$-divisors on $X$, or equivalently,
over a sequence of ample $\RR$-divisors whose classes in $\NS^1(X)_{\RR}$ go to zero.
It follows from definition that if $D'$ is a big $\RR$-divisor and $A$ is ample, then
$$\ord_{\Gamma}\parallel D'\parallel (\Gamma\cdot A^{n-1})\leq (D'\cdot A^{n-1}).$$
We deduce that if $D$ is pseudo-effective and $A$ is a fixed ample $\RR$-divisor, then
$$\sigma_{\Gamma}(D)\leq\lim_{\epsilon\to 0}\frac{((D+\epsilon A)\cdot A^{n-1})}
{(\Gamma\cdot A^{n-1})}=\frac{(D\cdot A^{n-1})}{(\Gamma\cdot A^{n-1})}<\infty.$$
The function
$\sigma_{\Gamma}$ is lower semi-continuous on the pseudo-effective cone,
and it agrees with $\ord_{\Gamma}\parallel-\parallel$ on the big cone.

\begin{proposition}\label{char_non_nef}
Let $D$ be a pseudo-effective $\RR$-divisor on the smooth projective variety $X$, and let
$\Gamma$ be a prime divisor on $X$.
\begin{enumerate}
\item[i)] If $(A_m)_{m\geq 1}$ is a sequence of ample $\RR$-divisors whose classes
in $\NS^1(X)_{\RR}$ go to zero, and such that all
$D+A_m$ are $\QQ$-divisors, then $\sigma_{\Gamma}(D)=0$ if and only if
$\Gamma\not\subseteq \SB(D+A_m)$ for every $m\geq 1$.
\item[ii)] If $k$ is uncountable, then $\sigma_{\Gamma}(D)=0$ if and only if
$\Gamma\not\subseteq\B_-(D)$.
\end{enumerate}
\end{proposition}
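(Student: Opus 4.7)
The plan is to reduce both parts of the proposition to the observation that for any big $\QQ$-divisor $E$, one has $\ord_{\Gamma}\parallel E\parallel =0$ if and only if $\Gamma\not\subseteq\SB(E)$. The forward direction uses the fact that $\ord_{\Gamma}|mE|$ takes values in $\ZZ_{\geq 0}$ and is subadditive in $m$, so by Fekete's lemma $\ord_{\Gamma}\parallel E\parallel=\inf_m\ord_{\Gamma}|mE|/m=\lim_{m\to\infty}\ord_{\Gamma}|mE|/m$; this limit can be zero only if $\ord_{\Gamma}|mE|=0$ for some $m$ with $mE$ Cartier, which means the general element of $|mE|$ does not contain $\Gamma$, equivalently $\Gamma\not\subseteq\SB(E)$. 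The reverse direction is immediate from the definition.

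For (i), the central step is a monotonicity statement: if $A,A'$ are ample $\RR$-divisors such that $A'-A$ is ample and $D+A,D+A'$ are both $\QQ$-divisors, then
$$\ord_{\Gamma}\parallel D+A'\parallel\leq\ord_{\Gamma}\parallel D+A\parallel.$$
I would prove this via the standard multiplication-of-sections trick: for $m$ sufficiently divisible, the multiplication map $H^0(X,m(D+A))\otimes H^0(X,m(A'-A))\to H^0(X,m(D+A'))$ yields $\ord_{\Gamma}|m(D+A')|\leq\ord_{\Gamma}|m(D+A)|+\ord_{\Gamma}|m(A'-A)|$; dividing by $m$ and letting $m\to\infty$, the last term contributes $\ord_{\Gamma}\parallel A'-A\parallel=0$ because an ample $\QQ$-divisor is eventually base-point-free. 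With this monotonicity in hand, for any ample $A$ with $D+A$ a $\QQ$-divisor, $A-A_m$ is ample for $m\gg 0$, so $\ord_{\Gamma}\parallel D+A\parallel\leq\ord_{\Gamma}\parallel D+A_m\parallel$. Combining with the definition of $\sigma_{\Gamma}$, this yields
$$\sigma_{\Gamma}(D)=\sup_{A}\ord_{\Gamma}\parallel D+A\parallel=\sup_{m\geq 1}\ord_{\Gamma}\parallel D+A_m\parallel,$$
which is zero if and only if every term $\ord_{\Gamma}\parallel D+A_m\parallel$ vanishes, equivalently (by the opening observation applied to each big $\QQ$-divisor $D+A_m$) $\Gamma\not\subseteq\SB(D+A_m)$ for every $m\geq 1$.

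For (ii), I would use the remark already recorded in the excerpt that $\B_-(D)=\bigcup_{m\geq 1}\SB(D+A_m)$ for any such sequence $(A_m)$. The nontrivial direction is: if $\sigma_{\Gamma}(D)=0$, then $\Gamma\not\subseteq\B_-(D)$. By (i), each $\SB(D+A_m)\cap\Gamma$ is a proper Zariski closed subset of the irreducible variety $\Gamma$. Since $k$ is uncountable, a positive-dimensional irreducible $k$-variety cannot be expressed as a countable union of proper closed subsets (a standard point-counting argument), so $\Gamma\not\subseteq\bigcup_m\SB(D+A_m)=\B_-(D)$. The converse is immediate: if $\Gamma\not\subseteq\B_-(D)$, then $\Gamma\not\subseteq\SB(D+A_m)$ for every $m$, whence $\sigma_{\Gamma}(D)=0$ by (i). The only nontrivial obstacle in the entire argument is the monotonicity step in (i); everything else is formal given the preparation already in the section.
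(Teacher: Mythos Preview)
Your monotonicity argument and the identification $\sigma_{\Gamma}(D)=\sup_{m}\ord_{\Gamma}\parallel D+A_m\parallel$ are fine, and your deduction of (ii) from (i) is correct. The gap is in the ``opening observation''.

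The Fekete step is simply wrong: from $\inf_m a_m/m=0$ with $a_m\in\ZZ_{\ge 0}$ subadditive you cannot conclude that $a_m=0$ for some $m$; the constant sequence $a_m=1$ already fails. So your proof of the forward implication $\ord_{\Gamma}\parallel E\parallel=0\Rightarrow\Gamma\not\subseteq\SB(E)$ does not stand. Worse, the implication itself is not true for an arbitrary big $\QQ$-divisor $E$: the asymptotic order $\ord_{\Gamma}\parallel\cdot\parallel$ is a numerical invariant of big divisors, whereas $\SB(\cdot)$ is not, so one can have numerically equivalent big divisors with the same (zero) asymptotic order along $\Gamma$ but different stable base loci. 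The correct statement replaces $\SB(E)$ by $\B_-(E)$, and that is exactly the nontrivial theorem of \cite[Prop.~2.8]{ELMNP} (characteristic zero, via multiplier ideals and Nadel vanishing) and \cite[Thm.~7.2]{Mustata} (positive characteristic, via test ideals). There is no way to get this from subadditivity alone.

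This is why the paper proceeds in the opposite order: it takes (ii) as the substantive statement, appeals to those references for it, and then obtains (i) by extending the ground field to an uncountable one (the quantities $\sigma_{\Gamma}(D)$ and $\SB(D+A_m)$ are insensitive to such an extension), after which (i) is a reformulation of (ii). If you want to keep your order (i) $\Rightarrow$ (ii), you can, but the forward direction of (i) still needs the cited input: from $\sigma_{\Gamma}(D)=0$ one gets $\ord_{\Gamma}\parallel D+A_{m'}\parallel=0$ for every $m'$, hence $\Gamma\not\subseteq\B_-(D+A_{m'})$ by \cite{ELMNP}/\cite{Mustata}; then for fixed $m$ and $m'\gg m$ one has $\SB(D+A_m)\subseteq\B_-(D+A_{m'})$ since $A_m-A_{m'}$ is ample, giving $\Gamma\not\subseteq\SB(D+A_m)$.
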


\begin{proof}
For the assertion in ii), see \cite[Prop. 2.8]{ELMNP} and \cite[Thm. 7.2]{Mustata}
for the cases when the ground field has characteristic zero or positive, respectively.
Note that in order to check the assertion in i), we may extend the ground field and therefore assume that it is uncountable. In this case,
i) is just a reformulation of ii).
\end{proof}

It was shown in \cite[Cor. III.1.10]{Nakayama} that if $D$ is a pseudo-effective $\RR$-divisor on $X$ and
$\Gamma_1,\ldots,\Gamma_r$ are mutually
distinct prime divisors with $\sigma_{\Gamma_i}(D)>0$ for all $i$, then the classes
of $\Gamma_1,\ldots,\Gamma_r$ in $\NS^1(X)_{\RR}$ are linearly independent
(the proof therein is characteristic-free).
In particular, $r$ is bounded above by $\dim_{\RR}\NS^1(X)_{\RR}$, hence there are only
finitely many $\Gamma$ with $\sigma_{\Gamma}(D)>0$. One defines
$$N_{\sigma}(D):=\sum_{\Gamma}\sigma_{\Gamma}(D)\Gamma,\,\,\,\,P_{\sigma}(D):=
D-N_{\sigma}(D).$$
The decomposition $D=N_{\sigma}(D)+P_{\sigma}(D)$ is known as the
\emph{divisorial Zariski decomposition} of $D$, and $N_{\sigma}(D)$ and $P_{\sigma}(D)$
are the negative and the positive part, respectively, of this decomposition. Note that
$N_{\sigma}(D)$ is an effective $\RR$-divisor, and it only depends on the numerical
class of $D$.

\begin{proposition}\label{prop_Nakayama}
Let $D$ be a pseudo-effective $\RR$-divisor as above, and $D=N_{\sigma}(D)+P_{\sigma}(D)$
its divisorial Zariski decomposition. For every $\RR$-divisor $F$ with $0\leq F\leq
N_{\sigma}(D)$, the divisor $D-F$ is pseudo-effective and $N_{\sigma}(D-F)=N_{\sigma}(D)-F$.
In particular, if $A$ is an ample $\RR$-divisor such that $P_{\sigma}(D)+A$ is a $\QQ$-divisor,
then $\SB(P_{\sigma}(D)+A)$ contains no subvarieties of codimension one.
\end{proposition}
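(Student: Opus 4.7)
The plan is to prove that for every prime divisor $\Gamma$ on $X$ one has
\[
\sigma_\Gamma(D - F) \;=\; \sigma_\Gamma(D) - \mathrm{coeff}_\Gamma(F),
\]
which contains both the pseudo-effectivity of $D - F$ and the identity $N_\sigma(D - F) = N_\sigma(D) - F$ (by summing over $\Gamma$ in the finite support). Write $\Gamma_1, \ldots, \Gamma_r$ for the primes appearing in $N_\sigma(D)$, with $a_i := \sigma_{\Gamma_i}(D) > 0$. The workhorse of the argument is the combination of two facts about the auxiliary orders $\ord_{\Gamma_i}\|D + \epsilon A\|$ for $A$ ample and $\epsilon > 0$: by subadditivity of $\sigma_{\Gamma_i}$ (together with $\sigma_{\Gamma_i}(A) = 0$ for ample $A$) one has $\ord_{\Gamma_i}\|D + \epsilon A\| \leq a_i$, while the definition of $\sigma_{\Gamma_i}(D)$ gives $\ord_{\Gamma_i}\|D + \epsilon A\| \to a_i$ as $\epsilon \to 0^+$. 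In particular, for every $\eta > 0$ one may choose a single $\epsilon > 0$ with $\ord_{\Gamma_i}\|D + \epsilon A\| \geq a_i - \eta$ simultaneously for all $i$.

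First I would prove that $P_\sigma(D) := D - N_\sigma(D)$ is pseudo-effective, which suffices since $D - F = P_\sigma(D) + (N_\sigma(D) - F)$ and the second summand is effective. Given ample $A$, pick $\eta, \epsilon > 0$ with the property above, and let $E$ be any effective $\RR$-divisor with $E \sim_\RR D + \epsilon A$. Every effective representative satisfies $\mathrm{coeff}_{\Gamma_i}(E) \geq \ord_{\Gamma_i}\|D + \epsilon A\| \geq a_i - \eta$, so $E + \eta \sum_i \Gamma_i - N_\sigma(D)$ is effective, exhibiting $P_\sigma(D) + \epsilon A + \eta \sum_i \Gamma_i$ as effective. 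Taking $\eta$ small enough that $(1 - \epsilon)A - \eta \sum_i \Gamma_i$ remains ample then displays $P_\sigma(D) + A$ as a sum of a pseudo-effective and an ample class, hence big.

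The lower bound $\sigma_\Gamma(D - F) \geq \sigma_\Gamma(D) - \mathrm{coeff}_\Gamma(F)$ is immediate: any effective $E \sim_\RR D - F + A$ gives the effective $E + F \sim_\RR D + A$, so $\ord_\Gamma\|D + A\| \leq \mathrm{coeff}_\Gamma(E) + \mathrm{coeff}_\Gamma(F)$, and one takes the infimum over $E$ and the supremum over ample $A$. The main obstacle is the matching upper bound. Given ample $A$ and $\eta > 0$, choose $\epsilon > 0$ as above and pick an effective $E \sim_\RR D + \epsilon A$ with $\mathrm{coeff}_\Gamma(E) \leq \ord_\Gamma\|D + \epsilon A\| + \eta \leq \sigma_\Gamma(D) + \eta$; as before, $E - F + \eta \sum_i \Gamma_i$ is effective, lies in the class of $D - F + \epsilon A + \eta \sum_i \Gamma_i$, and has $\mathrm{coeff}_\Gamma \leq \sigma_\Gamma(D) - \mathrm{coeff}_\Gamma(F) + 2\eta$. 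For $\eta$ small enough the class $(1-\epsilon) A - \eta \sum_i \Gamma_i$ is ample, hence nef, so its asymptotic order at $\Gamma$ is zero and it admits an effective representative $H$ with $\mathrm{coeff}_\Gamma(H) < \eta$. Then $E - F + \eta \sum_i \Gamma_i + H$ is an effective $\RR$-divisor in the class of $D - F + A$ with $\mathrm{coeff}_\Gamma \leq \sigma_\Gamma(D) - \mathrm{coeff}_\Gamma(F) + 3\eta$. Letting $\eta \to 0$ and taking the supremum over ample $A$ finishes the upper bound.

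The \emph{in particular} assertion then follows by specializing the main identity to $F = N_\sigma(D)$: one obtains $\sigma_\Gamma(P_\sigma(D)) = 0$ for every prime divisor $\Gamma$, and Proposition~\ref{char_non_nef}(i) immediately implies that no prime divisor is contained in $\SB(P_\sigma(D) + A)$ for any ample $\RR$-divisor $A$ with $P_\sigma(D) + A$ a $\QQ$-divisor.
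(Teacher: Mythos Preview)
Your argument is correct and essentially reconstructs Nakayama's proof of \cite[Lem.~III.1.8]{Nakayama}, which the paper simply cites (noting that the proof there is characteristic-free). For the ``in particular'' assertion, you and the paper proceed identically: deduce $\sigma_\Gamma(P_\sigma(D))=0$ for every prime $\Gamma$ from the first part and then apply Proposition~\ref{char_non_nef}. So the approach is the same; you supply the details the paper omits.

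Two small points of presentation. First, your opening sentence says the identity $\sigma_\Gamma(D-F)=\sigma_\Gamma(D)-\mathrm{coeff}_\Gamma(F)$ ``contains'' the pseudo-effectivity of $D-F$, but in fact $\sigma_\Gamma$ is only defined on the pseudo-effective cone, so the identity presupposes it; you correctly handle this by establishing pseudo-effectivity of $P_\sigma(D)$ separately before proving the identity. Second, the order of the choices of $\eta$ and $\epsilon$ deserves a word: you need $(1-\epsilon)A-\eta\sum_i\Gamma_i$ ample while also $\ord_{\Gamma_i}\|D+\epsilon A\|\geq a_i-\eta$. Since $\ord_{\Gamma_i}\|D+\epsilon A\|$ is monotone increasing as $\epsilon\to 0$, one may first fix $\eta$ small enough that $A-\eta\sum_i\Gamma_i$ is ample, and then choose $\epsilon>0$ small enough for both conditions. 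With this clarified, the argument goes through cleanly.
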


\begin{proof}
The first assertion is proved in \cite[Lem.~III.1.8]{Nakayama}, and the proof therein is independent of characteristic. This implies  that for every prime divisor $\Gamma$, we have
$\sigma_{\Gamma}(P_{\sigma}(D))=0$. The second assertion now follows from Proposition~\ref{char_non_nef}.
\end{proof}

For future reference we include the following two lemmas. Both results are well-known,
but we include the proofs for the benefit of the reader.

\begin{lemma}\label{lem_num_trivial1}
If $D$ is a pseudo-effective $\RR$-divisor on the smooth $n$-dimensional projective variety $X$
and $D$ is not numerically trivial, then for every ample $\RR$-divisors
$A_1,\ldots,A_{n-1}$ on $X$, we have $(D\cdot A_1\cdot\ldots\cdot A_{n-1} )>0$.
\end{lemma}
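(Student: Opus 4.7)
The strategy is to first establish non-negativity, then promote vanishing of the single intersection number $(D\cdot A_1\cdots A_{n-1})$ to vanishing of the entire multilinear form $(E_1,\ldots,E_{n-1})\mapsto (D\cdot E_1\cdots E_{n-1})$ via a perturbation argument, and finally apply the Hodge index theorem to force $D\equiv 0$, contradicting the hypothesis.

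For the non-negativity $(D\cdot A_1\cdots A_{n-1})\ge 0$: since $D$ is pseudo-effective, its class equals $\lim_{m}[D_m]$ in $\NS^1(X)_\RR$ for some effective $\QQ$-divisors $D_m$. Each $(D_m\cdot A_1\cdots A_{n-1})$ is a non-negative combination of terms $(F\cdot A_1\cdots A_{n-1})$, one for each prime component $F$ of $D_m$; restricting to $F$ exhibits this as a product of ample classes on $F$, hence is $\ge 0$. Passing to the limit gives the inequality.

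Now suppose for contradiction that $(D\cdot A_1\cdots A_{n-1})=0$; the goal is $D\equiv 0$. The linear functional $f_1(B):=(D\cdot B\cdot A_2\cdots A_{n-1})$ on $\NS^1(X)_\RR$ is non-negative on the ample cone by the previous step and vanishes at $A_1$. Since $A_1$ is interior, for each $B\in\NS^1(X)_\RR$ and sufficiently small $|t|>0$ both $A_1\pm tB$ are ample, so $f_1(A_1\pm tB)=\pm t\,f_1(B)\ge 0$ forces $f_1(B)=0$. Iterating across the remaining slots—in each round keeping an ample class in at least one slot in order to preserve the non-negativity hypothesis, and using symmetry of the intersection pairing together with the fact that the ample cone spans $\NS^1(X)_\RR$—one deduces $(D\cdot E_1\cdots E_{n-1})=0$ for every choice of $E_i\in\NS^1(X)_\RR$. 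Specializing to $E_i=A$ for a fixed ample $A$ gives $(D\cdot A^{n-1})=0$, and specializing to $E_1=D$, $E_j=A$ for $j\ge 2$ gives $(D^2\cdot A^{n-2})=0$. The higher-dimensional Hodge index inequality
\[
(D\cdot A^{n-1})^2 \;\ge\; (D^2\cdot A^{n-2})\,(A^n),
\]
valid in any characteristic via reduction to a smooth surface cut out by general very ample divisors (Bertini in projective space works in arbitrary characteristic) and the classical surface Hodge theorem, holds here with equality, which implies $D\equiv \lambda A$ for some $\lambda\in\RR$; plugging back, $0=(D\cdot A^{n-1})=\lambda(A^n)$ forces $\lambda=0$, i.e., $D\equiv 0$.

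The main obstacle I anticipate is the bookkeeping in the iterative perturbation step: once some slots contain arbitrary (possibly non-ample) classes, non-negativity of the form in the remaining slots is no longer automatic, so the iteration must be organized so that the hypothesis of Step 1 is always available in the active slot—alternating between extending one slot while the others still carry ample classes and then using symmetry and linearity to re-propagate—in order to reach vanishing on all of $\NS^1(X)_\RR^{n-1}$.
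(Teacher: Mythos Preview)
Your iteration step actually works cleanly and is not the obstacle you anticipate. Once $(D\cdot B\cdot A_2\cdots A_{n-1})=0$ for all $B$, fix any ample $B_1$; since $B_1,A_3,\ldots,A_{n-1}$ are ample and $D$ is pseudo-effective, the interior-point trick applied to the second slot gives $(D\cdot B_1\cdot B_2\cdot A_3\cdots A_{n-1})=0$ for all $B_2$, and then linearity in the first slot (ample classes span $\NS^1(X)_\RR$) extends this to all $B_1$. Iterating yields $(D\cdot E_1\cdots E_{n-1})=0$ for all $E_i$, hence in particular $(D^2\cdot A^{n-2})=0$.

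The genuine gap is in your final step. Cutting to a general smooth complete-intersection surface $S$ and applying the surface Hodge index theorem gives only $D|_S\equiv\lambda\,(A|_S)$ in $\NS^1(S)_\RR$; concluding $D\equiv\lambda A$ in $\NS^1(X)_\RR$ requires the restriction $\NS^1(X)_\RR\to\NS^1(S)_\RR$ to be injective. Equivalently, your equality-case claim is exactly the assertion that the form $(E_1,E_2)\mapsto(E_1\cdot E_2\cdot A^{n-2})$ on $\NS^1(X)_\RR$ is non-degenerate (signature precisely $(1,\rho-1)$, not merely one positive eigenvalue). This is true in every characteristic, but it does not follow from Bertini plus the surface case; one needs Grothendieck--Lefschetz for Picard groups or hard Lefschetz for $\ell$-adic cohomology, neither of which your sketch invokes.

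The paper's proof avoids this entirely by a different choice of surface. It starts from a curve $C$ with $(D\cdot C)\neq 0$ and cuts down by general members of $|mA_i|$ \emph{passing through $C$} to an irreducible reduced (possibly singular) surface $S\supset C$ on which $D|_S$ is still pseudo-effective. The interior-point trick, applied once on $S$, forces $(D|_S)^2=0$. On a resolution $f\colon S'\to S$, the class $f^*(D|_S)$ is orthogonal to an ample class and is visibly not numerically trivial (it pairs nontrivially with any curve dominating $C$), so the surface Hodge theorem gives $(f^*(D|_S))^2<0$ strictly, a contradiction. Carrying the witnessing curve into the surface is precisely what eliminates the need to compare $\NS^1(X)$ with $\NS^1(S)$.
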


\begin{proof}
%Without loss of generality we may assume that the base field is uncountable.
Since $D$ is pseudo-effective, it is clear that $(D\cdot A_1\cdot\ldots\cdot  A_{n-1} )\geq 0$,
so we only need to show that this intersection number is nonzero.
This is clear if $X$ is a curve, hence from now on we assume that $n\geq 2$.
Note also that the intersection number we are interested in does not change if we extend the ground field, hence we may assume that the ground field is uncountable.

 If $A'_1,\ldots,A'_{n-1}$ are ample $\QQ$-divisors such that each $A_i-A'_i$ is ample, then
$$(D\cdot A_1\cdot\ldots\cdot A_{n-1})\geq (D\cdot A'_1\cdot\ldots\cdot A'_{n-1}).$$ Therefore we may assume that
each $A_i$ is a $\QQ$-divisor. Furthermore, we may replace each $A_i$ by a multiple and so we may assume that $A_i$ has integer coefficients.

Since $D$ is not numerically trivial, there is an irreducible curve $C$ in $X$ such that
$(D\cdot C)\neq 0$. If $n\geq 3$, let
 $\pi\colon Y\to X$ be the blow-up of $X$ along $C$, with exceptional divisor
$E$.  For $m\gg 0$, the line bundle $\cO_Y(m\pi^*(A_{n-1})-E)$ is very ample. Bertini's theorem
then implies that if $H$ is a general element of $|mA_{n-1}|$ that vanishes along $C$, then
$H$ is irreducible. If $H=r\widetilde{H}$, where $\widetilde{H}=H_{\rm red}$, then
$$(D\cdot A_1\ldots \cdot A_{n-1})=\frac{1}{m}(D\vert_H\cdot A_1\vert_H\cdot\ldots\cdot
 A_{n-2}\vert_H)=
\frac{r}{m}
(D\vert_{\widetilde{H}}\cdot A_1\vert_{\widetilde{H}}\cdot\ldots\cdot
 A_{n-2}\vert_{\widetilde{H}}).$$
Furthermore, if the class of $D$ is the limit of effective $\QQ$-divisor classes $D_m$, then by taking
$H$ to be very general (meaning, chosen outside a countable union of proper closed subvarieties of $|mA_{n-1}|$), we may assume that each $D_m\vert_{\widetilde{H}}$ is effective,
hence $D\vert_{\widetilde{H}}$ is pseudo-effective. Note also that since
$C\subset \widetilde{H}$, it is clear that $D\vert_{\widetilde{H}}$ is not numerically trivial.

After iterating this argument, we find an irreducible and reduced surface $S\subseteq X$
containing $C$, such that $D\vert_S$ is a pseudo-effective $\RR$-divisor and such that
$(D\cdot A_1\cdot\ldots\cdot A_{n-1} )$ is a positive multiple of
$(D\vert_S\cdot A_1\vert_S)$.  If this is zero,
then the linear map
$D'\to (D\vert_S\cdot D')$ vanishes at a point in the interior of the nef cone of $S$.
On the other hand, it is non-negative on the nef cone of $S$, since $D\vert_S$
is pseudo-effective. Therefore the map is identically zero. In order to obtain a contradiction, it is enough
to show that $(D\vert_S^2)\neq 0$.

Let $f\colon S'\to S$ be a resolution of singularities of $S$ (since $S$ is a surface, such a resolution is known to exist in arbitrary characteristic). Since $(f^*(A_1\vert_S))^2=
(A_1\vert_S^2)>0$ and $(f^*(D\vert_S)\cdot f^*(A_1\vert_S))=(D\vert_S\cdot A_1\vert_S)=0$,
it follows from the Hodge index theorem that $(f^*(D\vert_S)^2)\leq 0$, with equality if and only if
$f^*(D\vert_S)$ is numerically trivial. However, if $C'\subset S'$ is an irreducible curve that
dominates $C$, then $(f^*(D\vert_S)\cdot C')\neq 0$. Therefore
$(D\vert_S^2)<0$, which completes the proof of the lemma.
\end{proof}

\begin{lemma}\label{lem_num_trivial2}
If $D$ is a nef $\RR$-divisor on the smooth $n$-dimensional projective variety $X$
and  $0\leq j\leq n$ is such that the cycle class
$D^j$ is not numerically trivial, then for all ample $\RR$-divisors
$A_1,\ldots,A_{n-j}$ on $X$ we have
$(D^j\cdot A_1\cdot\ldots\cdot A_{n-j})>0$.
\end{lemma}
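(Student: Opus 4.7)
The plan is to prove the statement by induction on $n-j$. The base cases are straightforward: when $n-j=0$, one has $j=n$ and $D^n\geq 0$ by nefness, so $D^n\neq 0$ forces $D^n>0$; when $n-j=1$, one has $j=n-1$, and the class $D^{n-1}\in N_1(X)_{\RR}$ is pseudo-effective, being the limit as $\epsilon\to 0^+$ of the $1$-cycle classes $(D+\epsilon A_0)^{n-1}$ for any auxiliary ample $A_0$---each such class is an intersection of $n-1$ ample $\RR$-divisor classes and therefore represented by an effective $1$-cycle by Bertini---and since $D^{n-1}$ is nonzero by hypothesis, Kleiman's duality between the nef cone and the closure of the Mori cone gives $D^{n-1}\cdot A_1>0$.

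For the inductive step with $n-j\geq 2$, I would introduce the auxiliary $1$-cycle class
\[
\gamma \;:=\; D^{j}\cdot A_1\cdots A_{n-j-1}\;\in\;N_1(X)_{\RR},
\]
which is pseudo-effective by the same limit argument. In the sub-case where $D^{j+1}$ is not numerically trivial, the inductive hypothesis applied with the $n-(j+1)=n-j-1$ ample factors $A_1,\ldots,A_{n-j-1}$ gives
\[
\gamma\cdot D\;=\;D^{j+1}\cdot A_1\cdots A_{n-j-1}\;>\;0,
\]
so $\gamma$ is a nonzero pseudo-effective $1$-cycle. A second application of Kleiman's duality then yields $\gamma\cdot A_{n-j}=D^{j}\cdot A_1\cdots A_{n-j}>0$, as required.

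The hard sub-case, and the main obstacle, is when $D^{j+1}$ is numerically trivial while $D^j$ is not, so that $j$ equals the maximal exponent with $D^j\not\equiv 0$ and the inductive hypothesis is vacuous. Here one only obtains $\gamma\cdot D=0$, and showing $\gamma\neq 0$ requires a separate argument. The natural approach is to pick an integral $j$-dimensional subvariety $V\subset X$ with $D^j\cdot V>0$---such $V$ exists because $D^j\not\equiv 0$ while nefness forces $D^j\cdot V\geq 0$ on every effective $j$-cycle---so that $D|_V$ is a nef divisor with positive top self-intersection, hence big on a resolution of $V$. One must then transfer this bigness to the required inequality $D^j\cdot A_1\cdots A_{n-j}>0$, either by passing to a general hyperplane section containing $V$ and running the induction on $n$ in one lower dimension, or by invoking the fact that a complete intersection of $n-j$ ample divisors lies in the interior of the cone of pseudo-effective $j$-cycles (so that it pairs strictly positively with the nonzero nef cycle class $D^j$).
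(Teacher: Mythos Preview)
Your induction on $n-j$ does not actually reduce the difficulty: the easy sub-case merely cascades the problem from $j$ up to $\nu(D)$, and the entire content of the lemma lies in the hard sub-case $D^{j+1}\equiv 0$, which you leave unfinished. There, showing $\gamma \neq 0$ is equivalent (via the same Kleiman duality you invoke) to showing $\gamma \cdot B > 0$ for some ample $B$, which is the statement itself, so nothing is gained. Of your two suggested routes for this sub-case, the second---that $A_1 \cdots A_{n-j}$ is interior to the pseudo-effective cone of $j$-cycles---is circular: by definition this means every nonzero functional in the dual cone pairs strictly positively with it, and $D^j$ is exactly such a functional; establishing it independently would require results on positive cones of higher-codimension cycles that are neither elementary nor available here.

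Your first route---find $V$ with $(D^j\cdot V)>0$ and cut down by hyperplanes through $V$---is what the paper actually does, directly and without the inductive detour. Two points deserve emphasis. First, passing from $D^j \not\equiv 0$ (a priori a statement about pairings with Chern-class polynomials) to the existence of an integral $j$-dimensional $W$ with $(D^j\cdot W)>0$ is not automatic; the paper uses Grothendieck--Riemann--Roch via \cite[Example~19.1.5]{Fulton}. Second, here is the missing argument: following the cutting-down procedure of Lemma~\ref{lem_num_trivial1} (blow up along $W$ and take general irreducible members of $|mA_i|$ through $W$ for $i=2,\dots,n-j$), one reaches an irreducible reduced $(j{+}1)$-dimensional $Y\supseteq W$ such that $(D^j\cdot A_1\cdots A_{n-j})$ is a positive multiple of $(D\vert_Y^{j}\cdot A_1\vert_Y)$. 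Since $W$ is a divisor on $Y$, for $m\gg 0$ there is $Z\in|mA_1\vert_Y|$ having $W$ as a component, and nefness of $D$ gives $(D\vert_Y^{j}\cdot A_1\vert_Y) = \tfrac{1}{m}(D^j\cdot Z) \geq \tfrac{1}{m}(D^j\cdot W) > 0$.
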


\begin{proof}
By definition, the fact that $D^j$ is not numerically trivial means that there is a polynomial $P$ in Chern classes
of vector bundles on $X$ such that ${\rm deg}(P\cap D^j)\neq 0$. However, since $X$ is nonsingular, it follows from the Grothendieck-Riemann-Roch  theorem that, in fact, we can find
an irreducible subvariety $W$ of $X$ of dimension $j$ such that $(D^j\cdot W)\neq 0$
(see \cite[Example~19.1.5]{Fulton}).

We may assume that $1\leq j\leq n-1$, as otherwise the assertion in the lemma is trivial.
Arguing as in the proof of Lemma~\ref{lem_num_trivial1}, we may assume that all $A_i$ have integer coefficients. Furthermore, we
can find an irreducible and reduced subvariety $Y$ of $X$ of dimension $j+1$ such that $W\subseteq Y$ and $(D^j\cdot A_1\cdot\ldots\cdot A_{n-j})$ is a positive multiple of
$(D\vert_Y^j\cdot A_1\vert_Y)$. If $m\gg 0$, we can find  $Z$ in
$|mA_1\vert_Y|$ such that $W$ is an irreducible component of $Z$. Using the fact that $D$ is nef,
we conclude that
$$(D\vert_Y^j\cdot A_1\vert_Y)=\frac{1}{m}(D^j\vert_Z)\geq\frac{1}{m}(D\vert_W^j)>0.$$
This completes the proof of the lemma.
\end{proof}

We now recall the definition of numerical dimension. Let $X$ be a smooth, projective variety
as above, and $D$ an $\RR$-divisor on $X$. The \emph{numerical dimension}
of $D$ is defined by
$$\kappa_\sigma(D):=\max\{\ell\in \ZZ_{\geq 0}\mid \liminf_{m\to\infty} \frac {h^0(X,\cO_X(\lfloor mD\rfloor+A))}{m^{\ell}}>0\quad \text{for some ample divisor }A \}$$
(by convention, if the above set is empty, then $\kappa_{\sigma}(D)=-\infty$).

Note that if $A$ satisfies the condition in the definition of $\kappa_{\sigma}(D)$ for
$\ell$, then the same holds for all divisors
$A'$ such that $A'-A$ is effective. We also note that in the above definition we could replace the
round-down by the round-up function. More precisely, we have
$$\kappa_\sigma(D)=\max\{\ell\in \ZZ_{\geq 0}\mid \liminf_{m\to\infty} \frac {h^0(X,\cO_X(\lceil mD\rceil+A))}{m^{\ell}}>0\quad \text{for some ample divisor }A \}.$$
Indeed, let $T$ be the reduced effective divisor with the same support as $D$, and let $H$ be an ample divisor such that $H-T$ is effective. For every $m\geq 1$, the difference
$T-(\lceil mD\rceil-\lfloor mD\rfloor)$ is effective, hence
$H-(\lceil mD\rceil-\lfloor mD\rfloor)$ is effective. Therefore for every ample divisor $A$ we have
$$h^0(X,\cO_X(\lceil mD\rceil + A))\le h^0(X,\cO_X(\lfloor mD\rfloor + A+H)).$$
This proves our assertion.

We will see in Proposition~\ref{char_pseudoeffective} below that
$D$ is pseudo-effective if and only if $\kappa_{\sigma}(D)\geq 0$.
It is also easy to see that $\kappa_{\sigma}(D)=\dim(X)$ if and only if $D$ is big.

\begin{remark}
In \cite{Nakayama}, variants of the above definition were introduced.  For example,
the limit inferior was replaced by limit superior, or the condition
$\liminf>0$ was replaced by $\limsup<\infty$. It was shown in \cite{Lehmann} that in characteristic zero
all these variants give the same invariant, which also admits other characterizations in terms of volumes or positive intersection products. We expect that such a result should also hold in
positive characteristic, but we do not pursue this here.
\end{remark}

In this paper we are mainly concerned with smooth varieties. However, we now briefly discuss the
numerical dimension of $\RR$-Cartier $\RR$-divisors on normal varieties. In particular, we show that it can be computed as
the numerical dimension of the pull-back to any smooth alteration.

\begin{remark}\label{non_smooth1}
If $D$ is an $\RR$-Cartier $\RR$-divisor on a normal projective variety $X$, then we can still define the numerical dimension $\kappa_{\sigma}(D)$ by the same formula as above.
In this case, however, it is convenient to also use an alternative description, as follows. Consider
a sequence of Cartier divisors $(D_m)_{m\geq 1}$ on $X$, with the following property:
there are finitely many Cartier divisors $T_1,\ldots,T_r$ and $M>0$
such that we can write $D=\sum_{i=1}^rq_iT_i$ and $D_m=\sum_{i=1}^rq_{m,i}T_i$,
with $q_i\in\RR$ and $q_{m,i}\in\ZZ$
such that $|mq_i-q_{m,i}|\leq M$ for every $i$ and $m$.
We can always find such a sequence: there is such an expression for $D$ since $D$
is $\RR$-Cartier, and we can take $D_m=\sum_{i=1}^rq_{m,i}T_i$,  with
$q_{m,i}=\lfloor mq_i\rfloor$ or $q_{m,i}=\lceil mq_i\rceil$.
 Given \emph{any} sequence
$(D_m)_{m\geq 1}$ as above,
$$\kappa_{\sigma}(D)=\max\{\ell\in \ZZ_{\geq 0}\mid \liminf_{m\to\infty} \frac {h^0(X,\cO_X(D_m+A))}{m^{\ell}}>0\, \text{for some ample Cartier divisor }A \}.$$
The equality is a consequence of the fact that under our assumptions on $(D_m)_{m\geq 1}$,
all divisors $D_m-\lfloor mD\rfloor$ are supported on a finite set of prime divisors, and their coefficients are bounded. Therefore
we can find an ample Cartier divisor $H$ such that we have  $h^0(X,\cO_X(H+D_m-\lfloor mD\rfloor))\geq 1$ and $h^0(X,\cO_X(H-D_m+\lfloor mD\rfloor))\geq 1$ for all $m\geq 1$.
\end{remark}

Recall that an alteration $\pi \colon Y \to X$ is a  projective, surjective, and generically finite morphism.  The existence of alterations with smooth total space is guaranteed by a theorem of de Jong \cite{deJong}.

\begin{proposition}\label{alteration}
Let $X$ be a normal projective variety and $D$ an $\RR$-Cartier $\RR$-divisor on $X$. If
$\pi\colon Y\to X$ is an alteration, with $Y$ normal, then
$$\kappa_{\sigma}(D)=\kappa_{\sigma}(\pi^*(D)).$$
\end{proposition}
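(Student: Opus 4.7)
The plan is to use the alternative description of $\kappa_\sigma$ from Remark~\ref{non_smooth1}. Fix a sequence of Cartier divisors $(D_m)_{m\geq 1}$ on $X$ as in that remark, with $D=\sum_i q_iT_i$ and $D_m=\sum_i q_{m,i}T_i$ satisfying $|mq_i-q_{m,i}|\leq M$. Since each $\pi^*T_i$ is Cartier on $Y$, the pulled-back sequence $(\pi^*D_m)$ satisfies the analogous condition relative to $\pi^*D=\sum_i q_i\pi^*T_i$, so by Remark~\ref{non_smooth1} it is a legitimate sequence for computing $\kappa_\sigma(\pi^*D)$.

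For the inequality $\kappa_\sigma(D)\leq\kappa_\sigma(\pi^*D)$: if $A_X$ is ample on $X$ with $\liminf_m h^0(X,\cO_X(D_m+A_X))/m^\ell>0$, then the natural injection $\cO_X\hookrightarrow\pi_*\cO_Y$ (which holds since $\pi$ is surjective and $X$ is reduced) gives an injection $H^0(X,\cO_X(D_m+A_X))\hookrightarrow H^0(Y,\pi^*(D_m+A_X))$. Picking an effective ample divisor $H_Y$ on $Y$ and setting $A_Y:=\pi^*A_X+H_Y$, which is ample because $\pi^*A_X$ is nef, we embed the latter into $H^0(Y,\pi^*D_m+A_Y)$ via multiplication by the section cutting out $H_Y$, and the desired inequality follows.

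For the reverse direction, fix ample $A_Y$ on $Y$ with $\liminf_m h^0(Y,\pi^*D_m+A_Y)/m^\ell>0$. The coherent sheaf $\pi_*\cO_Y(-A_Y)$ on $X$ is nonzero, being generically of rank $\deg\pi$, so by Serre's theorem there is an ample divisor $A_X$ on $X$ with $\pi_*\cO_Y(-A_Y)\otimes\cO_X(A_X)$ globally generated; in particular $H^0(Y,\pi^*A_X-A_Y)\neq 0$, so $\pi^*A_X-A_Y$ is effective and $H^0(Y,\pi^*D_m+A_Y)\hookrightarrow H^0(Y,\pi^*(D_m+A_X))=H^0(X,\cO_X(D_m+A_X)\otimes\pi_*\cO_Y)$ by the projection formula. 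The heart of the argument is to produce an injection $\pi_*\cO_Y\hookrightarrow\cO_X(A_0)^{\oplus N}$ for some ample divisor $A_0$ on $X$ and some $N\geq 1$. Since $Y$ is integral and $\pi$ is dominant, $\pi_*\cO_Y$ is torsion-free, so the natural map $\pi_*\cO_Y\hookrightarrow(\pi_*\cO_Y)^{\vee\vee}$ is injective; choosing $A_0$ ample so that $\HOM(\pi_*\cO_Y,\cO_X)\otimes\cO_X(A_0)$ is globally generated yields a surjection $\cO_X(-A_0)^{\oplus N}\twoheadrightarrow\HOM(\pi_*\cO_Y,\cO_X)$, and applying the left-exact functor $\HOM(-,\cO_X)$ produces $(\pi_*\cO_Y)^{\vee\vee}\hookrightarrow\cO_X(A_0)^{\oplus N}$. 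Composing gives the required injection. Tensoring by the line bundle $\cO_X(D_m+A_X)$ and taking $H^0$ yields $h^0(Y,\pi^*D_m+A_Y)\leq N\cdot h^0(X,\cO_X(D_m+A_X+A_0))$, so $\kappa_\sigma(D)\geq\ell$ with the ample divisor $A_X':=A_X+A_0$. The main obstacle is the construction of the injection $\pi_*\cO_Y\hookrightarrow\cO_X(A_0)^{\oplus N}$: Serre's theorem by itself only produces surjections from negatively twisted sums of line bundles onto coherent sheaves, and the torsion-freeness of $\pi_*\cO_Y$ is what allows the double-dual trick to convert this into the desired injection.
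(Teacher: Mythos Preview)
Your proof is correct and follows essentially the same approach as the paper's own proof: both use the sequences $(D_m)$ and $(\pi^*D_m)$ from Remark~\ref{non_smooth1}, handle the easy inequality by pulling back sections, and for the hard inequality first replace $A_Y$ by the pullback of an ample divisor on $X$ and then construct the embedding $\pi_*\cO_Y\hookrightarrow\cO_X(A_0)^{\oplus N}$ via the double-dual trick. The only cosmetic difference is that the paper obtains the effective divisor in $|d\pi^*H-A_Y|$ from bigness of $\pi^*H$, whereas you obtain it from global generation of $\pi_*\cO_Y(-A_Y)\otimes\cO_X(A_X)$.
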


\begin{proof}
Let us write $D=\sum_{i=1}^rq_iT_i$, where each $T_i$ is a Cartier divisor and $q_i\in\RR$,
and put $D_m=\sum_{i=1}^r\lfloor mq_i\rfloor T_i$ for $m\geq 1$. It follows from
Remark~\ref{non_smooth1} that we can use the sequences $(D_m)_{m\geq 1}$
and $(\pi^*(D_m))_{m\geq 1}$ to compute $\kappa_{\sigma}(D)$ and
$\kappa_{\sigma}(\pi^*(D))$, respectively.

If $A$ is an ample Cartier divisor on $X$ and $B$ is a Cartier divisor on $Y$ such that
$B-\pi^*(A)$ is ample, then we have
$$h^0(Y,\cO_Y(\pi^*(D_m)+B))\geq h^0(X,\cO_X(D_m+A))\,\text{for every}\,m\geq 1,$$
hence $\kappa_{\sigma}(\pi^*(D))\geq\kappa_{\sigma}(D)$. In order to prove the opposite inequality, suppose that $H_Y$ is an ample divisor on $Y$ such that
$h^0(Y, \cO_Y(\pi^*(D_m)+H_Y))\geq Cm^{\ell}$ for some $C>0$ and all $m\gg 0$.
If $H$ is any ample Cartier divisor on $X$, then $\pi^*(H)$ is big on $Y$, hence we can find
$d>0$ such that $h^0(Y,\cO_Y(d\pi^*(H)-H_Y))\geq 1$, hence
$$h^0(X,\cO_X(D_m+dH)\otimes\pi_*(\cO_Y))=h^0(Y, \cO_Y(\pi^*(D_m+dH)))\geq Cm^{\ell}\,\,\text{for all}\,\,m\gg 0.$$

Note now that for a suitable $d'>0$, we can find an embedding
$\pi_*(\cO_Y)\hookrightarrow \cO_X(d'H)^{\oplus N}$ for a positive integer $N$. Indeed,
let $d'\gg 0$ be such that $\pi_*(\cO_Y)^{\vee}\otimes\cO_X(d'H)$ is generated by global sections (for a sheaf ${\mathcal F}$, we denote by ${\mathcal F}^{\vee}$ its dual
${\mathcal Hom}_{\cO_X}({\mathcal F}, {\mathcal O}_X)$). We thus have a surjection
$\cO_X(-d'H)^{\oplus N}\to \pi_*(\cO_Y)^{\vee}$, which induces an embedding
$$\pi_*(\cO_Y)\hookrightarrow(\pi_*(\cO_Y)^{\vee})^{\vee}\hookrightarrow\cO_X(d'H)^{\oplus N}.$$ We deduce that for $m\gg 0$, we have
$h^0(X,\cO_X(D_m+(d+d')H))\geq \frac{C}{N}m^{\ell}$, hence
$\kappa_{\sigma}(D)\geq \ell$. This completes the proof of the proposition.
\end{proof}

We conclude this section with the following characterization of pseudo-effective divisors.

\begin{proposition}\label{char_pseudoeffective}
If $X$ is a smooth projective variety, then there is a divisor $G$
on $X$ such that an $\RR$-divisor $D$ on $X$ is pseudo-effective if and only if
$h^0(X,\cO_X(\lceil mD\rceil+G))>0$ for every $m\geq 1$. In particular, $D$
is pseudo-effective if and only if $\kappa_{\sigma}(D)\geq 0$.
\end{proposition}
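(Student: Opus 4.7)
The proof comprises two directions.

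For the ``if'' direction, suppose such a $G$ exists; we show $D$ is pseudo-effective. For each $m\geq 1$, choose an effective divisor $E_m$ with $E_m \sim \lceil mD\rceil + G$. Passing to classes in $\NS^1(X)_{\RR}$,
\[
\tfrac{1}{m}[E_m] \;=\; \tfrac{1}{m}[\lceil mD\rceil] \;+\; \tfrac{1}{m}[G].
\]
The rounding error $\lceil mD\rceil - mD$ has coefficients in $[0,1)$ and is supported on $\Supp(D)$, so $\tfrac{1}{m}\lceil mD\rceil \to D$ as $\RR$-divisors (hence also in $\NS^1(X)_{\RR}$) as $m \to \infty$, while $\tfrac{1}{m}[G] \to 0$. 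Thus $[D]$ is the limit of the effective classes $\tfrac{1}{m}[E_m]$, so $D$ is pseudo-effective.

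For the ``only if'' direction, the task is to exhibit a universal $G$ (depending only on $X$) such that for every pseudo-effective $\RR$-divisor $D$ on $X$ and every $m \geq 1$, $h^0(X, \cO_X(\lceil mD\rceil + G)) \geq 1$. The plan is to take $G = NH$, with $H$ a fixed very ample divisor on $X$ and $N$ a large integer depending only on $X$. Writing $\lceil mD\rceil + G = mD + F_m + G$, where $F_m := \lceil mD\rceil - mD \geq 0$ is supported on $\Supp(D)$ with coefficients in $[0,1)$, one notes that the class $[mD + F_m + G]$ is big, being a pseudo-effective class plus an ample class. The strategy is to combine Fujita's asymptotic vanishing theorem (valid in arbitrary characteristic) with Riemann--Roch: for $G$ sufficiently positive, one expects $H^i(X, \cO_X(\lceil mD\rceil + G)) = 0$ for $i>0$, and the leading term $\tfrac{1}{n!}(\lceil mD\rceil + G)^n$ of $\chi$ is bounded below by a positive quantity independent of $m$ and $D$, yielding $h^0 \geq 1$.

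The main obstacle is that Fujita's vanishing theorem applies when the divisor decomposes as an ample part plus a \emph{nef} part, whereas $\lceil mD\rceil$ is in general not nef. To overcome this, I would invoke the divisorial Zariski decomposition $D = P_\sigma(D) + N_\sigma(D)$ recalled above: the negative part $N_\sigma(D)$ is effective, so restoring it can only increase $h^0$, reducing to $P_\sigma(D)$, whose coefficient along every prime divisor vanishes. One then passes to a suitable birational modification $\mu\colon Y \to X$ on which $\mu^* P_\sigma(D)$ differs from a nef class by a $\mu$-exceptional effective divisor, applies the uniform vanishing on $Y$, and pushes the sections down to $X$. The most delicate issue is to select a single $G$ that works uniformly over all pseudo-effective $D$; finite-dimensionality of $\NS^1(X)_{\RR}$ and compactness of a normalized slice of the pseudo-effective cone (together with the boundedness of the birational modifications required) should make this possible.
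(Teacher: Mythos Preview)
Your ``if'' direction is fine and matches the paper's argument.

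The ``only if'' direction, however, has a genuine gap. The crux of your plan is to pass to a birational modification $\mu\colon Y\to X$ on which $\mu^*P_\sigma(D)$ becomes nef up to a $\mu$-exceptional effective divisor. This is precisely asking for a Zariski decomposition of $D$ in the classical sense, and Nakayama showed that such a decomposition need not exist (this failure is exactly why the weaker $\sigma$-decomposition was introduced). The vanishing of all $\sigma_\Gamma(P_\sigma(D))$ tells you only that no prime divisor lies in $\B_-(P_\sigma(D))$; it does not produce a model on which the pullback is nef. Consequently Fujita vanishing, which requires a nef twist, cannot be invoked. The subsequent appeal to ``boundedness of the birational modifications'' and compactness of a slice of the pseudo-effective cone is therefore moot, and in any case these modifications (when they exist) do not vary in any controlled way with $D$.

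The paper avoids this obstacle entirely. It sets $G=K_X+(n+2)H$ with $H$ very ample and $n=\dim X$, and uses the following uniform global generation statement: for any big divisor $E$, the sheaf $\tau(\parallel E\parallel)\otimes\cO_X(K_X+E+(n+1)H)$ is globally generated, where $\tau(\parallel E\parallel)$ is the asymptotic test ideal (in characteristic zero one uses the asymptotic multiplier ideal instead). Since $\lceil mD\rceil$ is pseudo-effective, $E:=\lceil mD\rceil+H$ is big; the test ideal is nonzero, so global generation yields $h^0(X,\cO_X(\lceil mD\rceil+G))>0$. No reduction to the nef case, no birational modification, and no compactness argument is needed.
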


\begin{proof}
The assertion is well-known when the ground field has characteristic zero (see
\cite[Cor.~V.1.4]{Nakayama}), so we only give the argument when the ground field has positive characteristic. Let $H$ be a very ample divisor. We show that
$G=K_X+(n+2)H$ has the required property, where $K_X$ is such that $\cO_X(K_X)\simeq\omega_X$ and
 $n=\dim(X)$.

Suppose first that $D$ is pseudo-effective.
Note that $\lceil mD\rceil -mD$ is effective, hence $\lceil mD\rceil$ is pseudo-effective,
and therefore $E=\lceil mD\rceil +H$ is a big divisor. It now follows from
\cite[Thm.~4.1]{Mustata} that
$$\tau(\parallel E\parallel)\otimes_{\cO_X}\cO_X(K_X+E+(n+1)H)$$
is globally generated, where $\tau(\parallel E\parallel)$ is the asymptotic test ideal of $E$.
The important thing for us is that $\tau(\parallel E\parallel)$ is nonzero, which implies
$h^0(X,\cO_X(\lceil mD\rceil+G))>0$. Note that one can give a similar argument in characteristic zero, by replacing the asymptotic test ideal by the asymptotic multiplier ideal, and using
the corresponding global generation result (see \cite[Cor.~11.2.13]{positivity}).

The converse is clear (in fact, any $G$ satisfies this direction). Indeed, if we have
$h^0(X,\cO_X(\lceil mD\rceil+G))>0$ for every $m\geq 1$, then each
$\frac{1}{m}\lceil mD\rceil +\frac{1}{m}G$ is pseudo-effective. Therefore the limit $D$
of these divisors is pseudo-effective as well.

The last assertion in the proposition is now a consequence of the first one, and of the second description of
$\kappa_{\sigma}(D)$ (the one using $\lceil mD\rceil$ instead of $\lfloor mD\rfloor$).
\end{proof}

\begin{remark}\label{non_smooth2}
The characterization of pseudo-effective divisors in terms of  their numerical dimension,
given in Proposition~\ref{char_pseudoeffective}, also holds for singular varieties. More precisely, if $D$ is an $\RR$-Cartier
$\RR$-divisor on the normal projective variety $X$, then $\kappa_{\sigma}(D)\geq 0$ if and only if
$D$ is pseudo-effective. Indeed, suppose that $D$ is pseudo-effective and consider an alteration
$\pi\colon Y\to X$, with $Y$ smooth. Since $\pi^*(D)$ is pseudo-effective, we have
$\kappa_{\sigma}(\pi^*(D))\geq 0$ by Proposition~\ref{char_pseudoeffective},
 and since $\kappa_{\sigma}(\pi^*(D))=
\kappa_{\sigma}(D)$ by Proposition~\ref{alteration}, we conclude that $\kappa_{\sigma}(D)\geq 0$.

Conversely, suppose that $\kappa_{\sigma}(D)\geq 0$, and let $(D_m)_{m\geq 1}$ be a sequence of Cartier divisors on $X$ as in Remark~\ref{non_smooth1}. By assumption, there
is an ample Cartier divisor $A$ such that $h^0(X,\cO_X(D_m+A))\geq 1$ for every $m\gg 0$.
Since $\frac{1}{m}(D_m+A)$ is a sequence of pseudo-effective divisors converging to
$D$, it follows that $D$ is pseudo-effective.
\end{remark}

\begin{remark}\label{non_smooth3}
The key ingredient in the proof of Proposition~\ref{char_pseudoeffective}
was the fact that on every smooth projective variety $X$, there is a
Cartier divisor $T$ such that for every big line bundle $L$ on $X$, we have
$h^0(X,L\otimes \cO_X(T))\geq 1$. This, in fact, holds on arbitrary projective varieties.
Indeed, given any projective variety $X$,
consider an alteration $\pi\colon Y\to X$
with $Y$ smooth, and let $T_Y$ be a divisor on $Y$ such that $h^0(Y, L'\otimes\cO_Y(T_Y))\geq 1$ for every big line bundle $L'$ on $Y$. Suppose that $L$ is a big line bundle on $X$. Since
$\pi^*(L)$ is big on $Y$, we have $h^0(Y,\pi^*(L)\otimes T_Y)\geq 1$. Arguing as in the proof of
Proposition~\ref{alteration}, we see that if $H$ is an ample Cartier divisor on $X$, then there are
positive integers $d$ and $d'$ such that $h^0(Y, \cO_Y(d\pi^*(H)-T_Y))\geq 1$
and we have an embedding $\pi_*(\cO_Y)\hookrightarrow \cO_X(d'H)^{\oplus N}$ for some
$N\geq 1$. We conclude that if $T=(d+d')H$, then for every big line bundle $L$ on
$X$, we have $h^0(X,L\otimes \cO_X(T))\geq 1$.

Note that if the Cartier divisor $T$ on $X$ is as above, and if $A$ is any ample Cartier divisor on $X$,
then $G=T+A$ satisfies the conclusion  of Proposition~\ref{char_pseudoeffective}, at least for
Cartier divisors. Indeed, if $D$ is a pseudo-effective Cartier divisor,
then $mD+A$ is big for every $m\geq 1$, hence $h^0(X,\cO_X(mD+G))\geq 1$.
\end{remark}

\begin{remark}\label{Totaro}
As B.~Totaro pointed out, one can alternatively deduce the assertions in
 Proposition~\ref{char_pseudoeffective} and Remark~\ref{non_smooth3} from the results in
 \cite{Arapura} and
\cite{Totaro}. Suppose, for simplicity, that $X$ is a smooth $n$-dimensional projective variety defined over a field of positive characteristic $p$.  It follows from a result of Arapura \cite[Thm. 5.4]{Arapura} that if $H$ is a large
enough multiple of an ample Cartier divisor, then $H$ has the following property: if
$M$ is a line bundle such that $H^n(X, M\otimes\cO_X(-(n+1)H))=0$, then $H^n(X, M^{p^e})=0$ for all $e$ large enough. If $L$ is a big line bundle, then
$H^0(X, \omega_X\otimes L^{p^e})\neq 0$ for $e\gg 0$. It follows from the above property of $H$ and Serre duality that  $H^0(X, \omega_X\otimes\cO_X((n+1)H)\otimes L)\neq 0$.
As we have seen, this is enough to give the statement of
Proposition~\ref{char_pseudoeffective}. One can then deduce the corresponding statement in the singular case as in Remark~\ref{non_smooth3}. Alternatively, one can obtain this directly
using a similar argument to the one above and Totaro's extension \cite[Thm.~5.1]{Totaro}
of Arapura's result to singular varieties.
\end{remark}

\section{A general vanishing statement}

In this section we give an elementary vanishing result, valid in arbitrary characteristic.
This only relies on asymptotic Serre vanishing.

Suppose that $X$ is a normal projective variety over
an algebraically closed field $k$.
Given a nonzero ideal $\fra$ on $X$, we define for every
$\lambda\in\QQ_{\geq 0}$ another ideal $\fra_{\lambda}$, as follows. Let $\pi\colon\widetilde{X}\to X$
be the normalization of the blow-up of $X$ along $\fra$, and let us write
$\fra\cdot\cO_{\widetilde{X}}=\cO_{\widetilde{X}}(-F)$. With this notation, we put
$$\fra_{\lambda}:=\pi_*\cO_{\widetilde{X}}(-\lceil\lambda F\rceil).$$
Note that $\fra_{\lambda}\subseteq\pi_*\cO_{\widetilde{X}}=\cO_X$, hence
$\fra_{\lambda}$ is indeed an ideal of $\cO_X$.% at least if $\lambda \geq 0$ or $\codim V(\mathfrak a) \geq 2$.

\begin{proposition}\label{vanishing1}
Let $\fra$ be a nonzero ideal on $X$ and $B$ a Cartier divisor on $X$ such that
$\fra\otimes\cO_X(B)$ is globally generated. If $E$ is another Cartier divisor on $X$ such that
$E-\lambda B$ is ample for some $\lambda\in\QQ_{\geq 0}$, then there is some $\epsilon>0$ such that for every $\lambda'\in\QQ$ with
$\lambda<\lambda'\leq\lambda+\epsilon$ and every locally free sheaf $\cE$
of finite rank on $X$, we have
$$H^i(X,\cE\otimes\cO_X(\ell E)\otimes\fra_{\ell\lambda'})=0$$
for every $i\geq 1$ and every $\ell\gg 0$ ${\rm (}$depending on $\lambda'$ and $\cE$${\rm )}$.
\end{proposition}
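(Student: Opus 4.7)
The plan is to pull the entire vanishing problem back to the normalized blow-up $\pi\colon\widetilde X\to X$ of $\fra$, where $\fra\cdot\cO_{\widetilde X}=\cO_{\widetilde X}(-F)$, and to combine ordinary Serre vanishing on $\widetilde X$ with relative Serre vanishing along $\pi$. Two positivity facts on $\widetilde X$ drive the argument. First, $-F$ is $\pi$-ample by construction of the blow-up (normalization is finite, which preserves relative ampleness). Second, global generation of $\fra\otimes\cO_X(B)$ produces a surjection $\cO_X(-B)^{\oplus N}\twoheadrightarrow\fra$, and pulling this back and twisting by $\cO_{\widetilde X}(\pi^*B)$ exhibits $\cO_{\widetilde X}(\pi^*B-F)$ as globally generated, so that $\pi^*B-F$ is nef. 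Since $\pi^*(E-\lambda B)$ pulls back an ample class and $-F$ is $\pi$-ample, there is a rational $\epsilon>0$ such that $\pi^*(E-\lambda B)-\delta F$ is $\QQ$-ample on $\widetilde X$ for every $\delta\in\QQ\cap(0,\epsilon]$. This is the $\epsilon$ claimed by the proposition; it depends only on $E$, $\lambda$, $B$ and $\pi$, not on $\lambda'$ or $\cE$.

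Fix $\lambda'\in\QQ\cap(\lambda,\lambda+\epsilon]$ and let $d\in\NN$ be a common denominator of $\lambda$ and $\lambda'$. Adding the $\QQ$-ample divisor $d[\pi^*(E-\lambda B)-(\lambda'-\lambda)F]$ to the nef divisor $d\lambda(\pi^*B-F)$ yields the ample integer Cartier divisor $H:=d(\pi^*E-\lambda'F)$. Writing an arbitrary integer $\ell\geq 1$ as $\ell=dm+r$ with $0\leq r<d$, the fact that $dm\lambda'\in\ZZ$ gives the identity
\[
\ell\pi^*E-\lceil\ell\lambda'F\rceil\;=\;mH+G_r,\qquad G_r\;:=\;r\pi^*E-\lceil r\lambda'F\rceil,
\]
where $G_r$ is a Cartier divisor depending only on $r\in\{0,\dots,d-1\}$. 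Setting $\mathcal{M}_\ell:=\pi^*\cE\otimes\cO_{\widetilde X}(\ell\pi^*E-\lceil\ell\lambda'F\rceil)$, this rewrites as
\[
\mathcal{M}_\ell\;=\;\bigl(\pi^*\cE\otimes\cO_{\widetilde X}(G_r)\bigr)\otimes\cO_{\widetilde X}(H)^{\otimes m}.
\]
Ordinary Serre vanishing on $\widetilde X$ applied to the ample line bundle $\cO_{\widetilde X}(H)$ and to each of the finitely many coherent sheaves $\pi^*\cE\otimes\cO_{\widetilde X}(G_r)$ then gives $H^i(\widetilde X,\mathcal{M}_\ell)=0$ for every $i\geq 1$ and all $\ell\gg 0$.

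For the Leray spectral sequence of $\pi$ to identify $H^i(X,\pi_*\mathcal{M}_\ell)$ with $H^i(\widetilde X,\mathcal{M}_\ell)$, one also needs $R^j\pi_*\mathcal{M}_\ell=0$ for $j\geq 1$ and $\ell\gg 0$. By the projection formula this reduces to $R^j\pi_*\cO_{\widetilde X}(-\lceil\ell\lambda'F\rceil)=0$, and writing again $\ell=dm+r$ one factors
\[
\cO_{\widetilde X}\bigl(-\lceil\ell\lambda'F\rceil\bigr)\;=\;\cO_{\widetilde X}\bigl(-\lceil r\lambda'F\rceil\bigr)\otimes\cO_{\widetilde X}(-F)^{\otimes dm\lambda'};
\]
the required vanishing follows from relative Serre vanishing for the $\pi$-ample $\cO_{\widetilde X}(-F)$, applied to each of the finitely many twists $\cO_{\widetilde X}(-\lceil r\lambda'F\rceil)$. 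Combining the two vanishings, the Leray spectral sequence yields $H^i(X,\pi_*\mathcal{M}_\ell)=0$ for $i\geq 1$ and $\ell\gg 0$; a final application of the projection formula identifies $\pi_*\mathcal{M}_\ell$ with $\cE\otimes\cO_X(\ell E)\otimes\fra_{\ell\lambda'}$, which is the desired conclusion. The chief technical obstacle is the fractional correction $\lceil\ell\lambda'F\rceil-\ell\lambda'F$, which obstructs a naive application of Serre vanishing to a single sequence of ample multiples; this is circumvented by splitting $\ell$ into finitely many residue classes modulo $d$, so that the correction becomes one of finitely many fixed divisors $G_r$ that can be absorbed into the coherent sheaf.
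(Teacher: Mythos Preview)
Your proof is correct and follows essentially the same approach as the paper: pass to the normalized blow-up, use that $-F$ is $\pi$-ample together with nefness of $\pi^*B-F$ to manufacture an ample class on $\widetilde X$, split $\ell$ into residue classes modulo a common denominator $d$ to absorb the rounding correction into finitely many fixed divisors, and then combine Serre vanishing on $\widetilde X$ with relative Serre vanishing along $\pi$ via the Leray spectral sequence and projection formula. The only cosmetic difference is that you package the ample class as $H=d(\pi^*E-\lambda'F)$ directly, whereas the paper writes the same decomposition as $\pi^*(E-\lambda B)-(\lambda'-\lambda)F+\lambda M$ with $M=\pi^*B-F$; these are identical rearrangements of the same sum of an ample and a nef divisor.
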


\begin{proof}
Let $\pi\colon\widetilde{X}\to X$ and $F$ be as above, so that $\cO_{\widetilde{X}}(-F)$
is $\pi$-ample. Since $E-\lambda B$ is ample on $X$, it follows that there is
$\epsilon>0$ such that $\pi^*(E-\lambda B)-\eta F$ is ample on $\widetilde{X}$
for $0<\eta\leq\epsilon$.
Note also that by the assumption on $\fra$ and $B$, we can write $\pi^*(B)=F+M$, where
$\cO_{\widetilde{X}}(M)$ is base-point free.

Suppose now that $\lambda'\in\QQ$ satisfies $\lambda<\lambda'\leq\lambda+\epsilon$,
and let $d$ be a positive integer such that $d\lambda$ and $d\lambda'$ are integers. For every $\ell$ we can
write $\ell=qd+r$, for integers $q$ and $r$, with $0\leq r<d$. Therefore $\lceil \ell\lambda' F\rceil
=qd\lambda'F+\lceil r\lambda' F\rceil$. Note that when $\ell$ goes to infinity, then also
$q$ goes to infinity, while there are only finitely many divisors of the form $\lceil r\lambda' F\rceil$.

Since $\cO_{\widetilde{X}}(-F)$ is $\pi$-ample, it follows from the above discussion and relative
asymptotic Serre vanishing that for $\ell\gg 0$, we have
$$R^j\pi_*\cO_{\widetilde{X}}(-\lceil\ell\lambda' F\rceil)=0\,\,\text{for all}\,\,j\geq 1.$$
Whenever this holds, the projection formula and the Leray spectral sequence imply
\begin{equation}\label{eq1_prop1}
H^i(X,\cE\otimes \cO_X(\ell E)\otimes\fra_{\ell\lambda'})\simeq
H^i(\widetilde{X},\pi^*(\cE)\otimes\pi^*\cO_X(\ell E)\otimes\cO_{\widetilde{X}}(-\lceil
\ell\lambda' F\rceil)).
\end{equation}

On the other hand, if $\eta=\lambda'-\lambda$, then we can write
$$\pi^*(\ell E)-\lceil \ell\lambda' F\rceil =\ell\left(\pi^*(E-\lambda B)-\eta F+\lambda M\right) +
\ell\lambda'F-\lceil \ell\lambda' F\rceil.$$
Since $\pi^*(E-\lambda B)-\eta F$ is ample and $M$ is nef, it follows that
$\pi^*(E-\lambda B)-\eta F+\lambda M$ is ample.
If we write as above $\ell=qd+r$, then
$$\ell\left(\pi^*(E-\lambda B)-\eta F+\lambda M\right) +
\ell\lambda'F-\lceil \ell\lambda' F\rceil$$
$$=qd\left(\pi^*(E-\lambda B)-\eta F+\lambda M\right)
+r\left(\pi^*(E-\lambda B)-\eta F+\lambda M\right)+r\lambda'F-\lceil r\lambda'F\rceil.$$
Since when we vary $\ell$ the divisor
 $r\left(\pi^*(E-\lambda B)-\eta F+\lambda M\right)+r\lambda'F-\lceil r\lambda'F\rceil$
 can only take finitely many values, and when $\ell$ goes to infinity, $q$ also goes to infinity,
 it follows from asymptotic Serre vanishing that
 $$H^i(\widetilde{X},\pi^*(\cE)\otimes\pi^*\cO_X(\ell E)\otimes\cO_{\widetilde{X}}(-\lceil
\ell\lambda' F\rceil))=0$$
for all $i\geq 1$ and all $\ell\gg 0$. The assertion in the proposition now follows from this and
the isomorphism (\ref{eq1_prop1}).
\end{proof}

\begin{remark}
For our purpose the precise definition of $\fra_{\lambda}$ will not be  important.
For example, we might have taken instead $\fra_{\lambda}:=
\pi_*\cO_{\widetilde{X}}(-\lfloor \lambda
F\rfloor)$, and in characteristic zero we might have taken $\fra_{\lambda}=\cJ(\fra^{\lambda})$,
the multiplier ideal of $\fra$ of exponent $\lambda$.
The proof of Proposition~\ref{vanishing1} works also with these definitions,
and the first variant would work as well for the applications in the next section.
The definition that we have considered gives the
\emph{integrally closed rational powers} of $\fra$, see \cite[\S 10.5]{HS}.
\end{remark}

\begin{corollary}\label{cor_vanishing}
The assertion in Proposition~\ref{vanishing1} also holds if instead of assuming $\cE$
locally free we only assume that ${\mathcal Tor}_i^{\cO_X}(\cE,\fra_{\ell\lambda'})=0$
for all $i\geq 1$, all $\lambda'\in\QQ_{>0}$, and all $\ell\gg 0$
${\rm (}$depending on $\lambda'$ and $\cE$${\rm )}$.
\end{corollary}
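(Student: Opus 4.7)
The plan is to reduce to the locally free case already treated in Proposition~\ref{vanishing1} by resolving $\cE$ by locally free sheaves of finite rank and transporting both the Tor-vanishing and the cohomological vanishing through the resolution.

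First, since $X$ is projective, one can construct a (possibly infinite) resolution
\[
\cdots \to \cF_2 \to \cF_1 \to \cF_0 \to \cE \to 0
\]
in which each $\cF_j$ is locally free of finite rank; for instance, take each $\cF_j$ to be a direct sum of sufficiently negative twists of a fixed ample invertible sheaf. Set $\cG_{-1}:=\cE$ and $\cG_j:=\ker(\cF_j\to\cF_{j-1})$ for $j\geq 0$, giving short exact sequences
\[
0 \to \cG_j \to \cF_j \to \cG_{j-1} \to 0.
\]
An induction on $j$, using the Tor long exact sequence and the fact that each $\cF_j$ is locally free, yields ${\mathcal Tor}_i^{\cO_X}(\cG_j,\fra_{\ell\lambda'}) \simeq {\mathcal Tor}_{i+j+1}^{\cO_X}(\cE,\fra_{\ell\lambda'})$ for all $i\geq 1$, and this vanishes for $\ell\gg 0$ by the hypothesis of the corollary. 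In particular, tensoring each short exact sequence with the invertible sheaf $\cO_X(\ell E)\otimes \fra_{\ell\lambda'}$ preserves exactness.

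Passing to the long exact sequence in cohomology and applying Proposition~\ref{vanishing1} to each locally free $\cF_j$, one sees that both $H^i(X,\cF_j \otimes \cO_X(\ell E) \otimes \fra_{\ell\lambda'})$ and $H^{i+1}(X,\cF_j \otimes \cO_X(\ell E) \otimes \fra_{\ell\lambda'})$ vanish for $i \geq 1$ and $\ell \gg 0$, whence
\[
H^i(X,\cG_{j-1} \otimes \cO_X(\ell E) \otimes \fra_{\ell\lambda'}) \simeq H^{i+1}(X,\cG_j \otimes \cO_X(\ell E) \otimes \fra_{\ell\lambda'}).
\]
Iterating this isomorphism $n+1$ times with $n=\dim(X)$ identifies $H^i(X,\cE \otimes \cO_X(\ell E) \otimes \fra_{\ell\lambda'})$ with $H^{i+n+1}(X,\cG_n \otimes \cO_X(\ell E) \otimes \fra_{\ell\lambda'})$, and the latter vanishes by Grothendieck's vanishing theorem since the cohomological degree exceeds $\dim(X)$.

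The only subtlety is bookkeeping: each step of the iteration may require a larger threshold for $\ell$, both to invoke Proposition~\ref{vanishing1} for $\cF_j$ and to obtain the Tor-vanishing of $\cG_j$. Since only $n+1$ steps are needed, the maximum of the resulting finitely many thresholds produces a single value of $\ell \gg 0$ valid for all required vanishings, and I do not anticipate any deeper obstacle.
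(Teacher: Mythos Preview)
Your argument is correct and follows essentially the same approach as the paper: resolve $\cE$ by locally free sheaves, use the Tor-vanishing hypothesis to ensure the resolution stays exact after tensoring with $\cO_X(\ell E)\otimes\fra_{\ell\lambda'}$, and then chase the resulting short exact sequences, applying Proposition~\ref{vanishing1} to the finitely many locally free terms needed to push the cohomological degree above $\dim X$. The only cosmetic difference is that you iterate $n+1$ times whereas $n$ iterations already suffice (since $i+n>n$ for $i\geq 1$), and the paper leaves the dimension-shifting implicit rather than spelling out the isomorphisms $H^i(\cG_{j-1})\simeq H^{i+1}(\cG_j)$.
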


\begin{proof}
Since $X$ is projective, we can find a (possibly infinite) resolution
$$\cdots\to\cE_i\to\ldots\to \cE_1\to\cE_0\to\cE\to 0,$$
with all $\cE_i$ locally free $\cO_X$-modules of finite rank.
Our assumption on $\cE$ implies that after tensoring this complex by
$\cO_X(\ell E)\otimes \fra_{\ell\lambda'}$, the resulting complex is still exact.
By chasing the resulting short exact sequences, we see that if
$\lambda'>\lambda$ is close enough to $\lambda$, and
$\ell$ is large enough (depending on $\lambda'$) so that the conclusion of Proposition~\ref{vanishing1} is satisfied for all $\cE_i$ with $0\leq i\leq n-1$, where
$n=\dim(X)$, then we obtain the conclusion of the corollary.
\end{proof}

\section{Proofs of the main results}

Our first goal is to prove Theorem~\ref{p_lifting} from the Introduction.
We henceforth assume that the ground field $k$ is algebraically closed, of
characteristic $p>0$.
A key ingredient
 is the log trace map with respect to the Frobenius morphism,
that we now review.

Suppose that $X$ is a smooth variety over $k$. Let $F=F_X\colon X\to X$ denote the absolute Frobenius morphism on $X$, which is the identity on the topological space and which takes
a regular function $u$ to $u^p$.
We have a canonical surjective map
$$t_{X}\colon F_*\omega_X\to \omega_X,$$
which can be
either defined as the trace map for duality with respect to $F$, or as coming from the Cartier isomorphism. Given algebraic coordinates $x_1,\ldots,x_n$ on an open subset $U$ of $X$, this map is characterized by
$$t_X(x_1^{i_1}\cdots x_n^{i_n}dx_1\wedge\cdots\wedge dx_n)=
x_1^{\frac{i_1-p+1}{p}}\cdots x_n^{\frac{i_n-p+1}{p}}dx_1\wedge\cdots\wedge dx_n,$$
where the monomial on the right-hand side is understood to be zero if one of the exponents is not an integer.
Iterating this map $e$ times we obtain a surjective map $t_X^e\colon F^e_*(\omega_X)\to\omega_X$.

Suppose now that $E=E_1+\ldots+E_N$ is a simple normal crossing divisor on $X$.
By tensoring $t_X$ with $\cO_X(E)$, and composing with the inclusion
$$F_*(\omega_X(E))\hookrightarrow F_*(\omega_X(pE))\simeq
 (F_*(\omega_X))\otimes\cO_X(E),$$ we obtain a \emph{log trace map}
 $$t_{X,E}\colon F_*(\omega_X(E))\to\omega_X(E).$$
 Using the above explicit description in terms of a system of coordinates on $U$ with the property
 that each prime divisor in $E$ which intersects $U$ is defined by some $x_i=0$, it is easy to see
 that $t_{X,E}$ is again surjective. After iterating  $e$ times $t_{X,E}$, we obtain
 $$t_{X,E}^e\colon F^e_*(\omega_X(E))\to\omega_X(E).$$

Note that this construction is compatible with adjunction. More precisely, given
$(X,E)$ as above, let $Y=E_1\cap\ldots\cap E_r$. This is a smooth variety of
codimension $r$ in $X$ (possibly
not connected), and $E$ induces a simple normal crossing divisor
$E_Y=\sum_{i=r+1}^NE_i\vert_Y$. Adjunction gives an isomorphism
$\omega_X(E_1+\ldots+E_r)\vert_Y\simeq\omega_Y$, and it follows from the above description
of the log trace maps in local coordinates that the diagram

\begin{equation}\label{comm_diag1}
\begin{CD}
(F_X)_*(\omega_X(E))@>>>
(F_Y)_*(\omega_Y(E_Y))\\
@V{t_{X,E}}VV @VV{t_{Y,E_Y}}V\\
\omega_X(E)@>>> \omega_Y(E_Y),
\end{CD}
\end{equation}
is commutative, where the horizontal maps are induced by restriction to $Y$
and the adjunction isomorphism.

%We can now prove the main result of this paper.

%\todoK{What do people think of putting the statement of the theorem here too, and also 1.1 before the proof of 1.1, just for ease of reference for notation.}
\noindent{\bf Theorem \ref{p_lifting}. }{\it Let $X$ be a smooth projective variety over an
algebraically closed field $k$ of characteristic $p>0$, and let $D$ be an $\RR$-divisor on $X$. Suppose that $H=H_1+\ldots+H_r$ is a simple normal crossing divisor on
$X$, with $r<\dim(X)$,
 and that
$W:=H_1\cap\dots\cap H_r$  does not intersect the non-nef locus $\mathbf B_-(D)$ of $D$.
In this case there exists an ample divisor $G$ on $X$ such that
if $$A=K_X+H+2G$$
then $A$ is ample and
the restriction map
\begin{equation}\label{e_lifting}
H^0(X,\cO_X(\lfloor mD\rfloor +A))\to H^0(W,\cO_X(\lfloor mD\rfloor +A)\vert_W)
\end{equation}
is surjective for every $m\ge 1$.
}
\begin{proof}%[Proof of Theorem~\ref{p_lifting}]
Let $\mathcal F$ be the kernel of the trace morphism
$$t_W\colon (F_W)_*(\omega_W)\to \omega_W.$$
By Fujita's vanishing theorem (see \cite{Fujita}), if $G$ is a large enough multiple of
a given ample divisor on $X$, then
\begin{equation}
\label{eq.H1Vanishing}
H^1(W,\mathcal F\otimes \cO_X(G)\vert_W\otimes L)=0
\end{equation}
for every nef line bundle $L$ on $W$.

Let us write $D=\alpha_1D_1+\ldots+\alpha_sD_s$, with the $D_i$ distinct prime divisors on $X$ and $\alpha_i\in \RR$. After possibly replacing $G$ by a multiple, we may assume that
$G-(t_1D_1+\ldots+t_sD_s)$ is ample for all $t_1,\ldots,t_s\in [0,1]$.
In this case $G-\{mD\}$ is ample for every $m\geq 1$, where
$\{mD\}=mD-\lfloor mD\rfloor$. Since
$\lfloor mD\rfloor+G=mD+(G-\{mD\})$, it follows from our assumption that
the stable base locus ${\rm SB}(\lfloor mD\rfloor+G)$ does not intersect $W$.
Let $r_m\geq 1$ be such that the base locus of $|r_m(\lfloor mD\rfloor+G)|$ does not intersect $W$, and let us denote by $\fra^{(m)}$ the ideal defining the base-locus of this linear system,
with its natural scheme structure.

Let $A=K_X+H+2G$, where $K_X$ is such that $\omega_X=\cO_X(K_X)$. After possibly replacing $G$ by a multiple, we may assume that $A$ is ample.  Let us fix $m\geq 1$. In order to prove the surjectivity of (\ref{e_lifting}),
we first apply Corollary~\ref{cor_vanishing}
for the ideal $\fra^{(m)}$, $\lambda_m=\frac{1}{r_m}$, the divisors
$B_m=r_m(\lfloor mD\rfloor+G)$ and
$E_m=\lfloor mD\rfloor +2G$, and the sheaf $\cE_m=\cI_W\otimes\omega_X(H)$,
where $\cI_W$ is the ideal defining the subvariety $W$.
Note that since $W$ is disjoint from the zero-locus of $\fra^{(m)}$,
we have ${\mathcal Tor}_i^{\cO_X}(\cE_m, \fra^{(m)}_{\mu})=0$
for every $\mu\in\QQ_{>0}$ and every $i\geq 1$. The vanishing given by Corollary~\ref{cor_vanishing}
implies that we can find $\lambda'_m>\lambda_m$ such that
for $e\gg 0$, the restriction map
\begin{equation}\label{eq1_theorem}
H^0(X, \omega_X(H)\otimes\cO_X(p^eE_m)\otimes\fra^{(m)}_{p^e\lambda'_m})
\to H^0(W, \omega_W\otimes\cO_X(p^eE_m)\vert_W)
\end{equation}
is surjective (note that $\fra^{(m)}_{p^e\lambda'_m}\cdot\cO_W=\cO_W$,
since $W$ does not intersect the zero-locus of
$\fra^{(m)}$). In particular, the restriction map
\begin{equation}\label{eq2_theorem}
H^0(X, \omega_X(H)\otimes\cO_X(p^eE_m))
\to H^0(W, \omega_W\otimes\cO_X(p^eE_m)\vert_W)
\end{equation}
is surjective for $e\gg 0$.

It follows from (\ref{comm_diag1}) after iteration that we also have a commutative diagram
\begin{equation}\label{diag1}
\begin{CD}
(F_X^e)_*(\omega_X(H)) @>>> (F_W^e)_*(\omega_W) \\
@V{t_{X,H}^e}VV@VV{t_W^e}V \\
\omega_X(H)@>>>  \omega_W
\end{CD}
\end{equation}
in which the horizontal maps are induced by restriction via adjunction,
and the vertical maps are the corresponding iterated trace maps.
Tensoring with $\cO_X(E_m)$ and using the projection formula, we obtain the commutative diagram
\begin{equation}\label{diag2}
\begin{CD}
(F_X^e)_*(\omega_X(H)\otimes\cO_X(p^eE_m)) @>>> (F_W^e)_*(\omega_W
\otimes\cO_X(p^eE_m)\vert_W) \\
@VVV@VVV \\
\omega_X(H)\otimes\cO_X(E_m)@>>>  \omega_W\otimes\cO_X(E_m)\vert_W.
\end{CD}
\end{equation}

By taking global sections, we obtain the commutative diagram
\begin{equation}
\begin{CD}\label{diag3}
H^0\big(X, (F_X^e)_*(\omega_X(H)\otimes\cO_X(p^eE_m))\big) @>>> H^0\big(W,(F_W^e)_*( \omega_W
\otimes\cO_X(p^eE_m)\vert_W)\big) \\
@VVV@VVV \\
H^0(X, \omega_X(H)\otimes\cO_X(E_m))@>>> H^0(W,  \omega_W\otimes\cO_X(E_m)\vert_W),
\end{CD}
\end{equation}
in which the top horizontal map is surjective for $e\gg 0$ by (\ref{eq2_theorem}).

We claim that the right vertical map is surjective for every $e\geq 1$. In order to prove this, it is enough to show that
$$H^0\big(W, (F^{i+1}_W)_*(\omega_W\otimes\cO_X(p^{i+1}E_m)\vert_W)\big)
\to H^0\big(W, (F^i_W)_*(\omega_W\otimes\cO_X(p^iE_m)\vert_W)\big)$$
is surjective for every $i\geq 0$. It follows from the exact sequence
$$0\to (F^i_W)_*(\cF)\to (F^{i+1}_W)_*(\omega_W)\to (F^i_W)_*(\omega_W)\to 0$$
that it is enough to show that $H^1(W, (F^i_W)_*(\cF\otimes\cO_X(p^iE_m)\vert_W))=0$,
or equivalently,
\begin{equation}\label{eq_vanishing}
H^1(W, \cF\otimes\cO_X(p^iE_m)\vert_W)=0.
\end{equation}
Recall that $\cO_X(\lfloor mD\rfloor +G)\vert_W$ is nef (in fact, semiample).
Therefore
$$\cF\otimes \cO_X(p^iE_m)\vert_W\simeq\cF\otimes \cO_X(G)\vert_W
\otimes\cO_X(p^i(\lfloor mD\rfloor+G)+
(p^i-1)G)\vert_W,$$
hence the vanishing in (\ref{eq_vanishing}) follows from \eqref{eq.H1Vanishing}.

Since both the right vertical and top horizontal maps in
(\ref{diag3}) are surjective for $e\gg 0$, it follows that also the bottom horizontal map in that diagram is surjective, which is precisely what we needed to prove.
\end{proof}

\begin{remark}
It follows from the above proof that the surjectivity in the statement of Theorem~\ref{p_lifting}
also holds if we replace $A$ by any divisor $A'$ such that $A'-A$ is ample.
\end{remark}

\begin{remark}\label{curve_case}
If the subvariety $W$ in Theorem~\ref{p_lifting} is a curve, then we can be more explicit
about the choice of $G$ such that we have the vanishing in (\ref{eq.H1Vanishing}). Indeed,
 by a theorem of Tango (cf. \cite{Tan72}) the vanishing in (\ref{eq.H1Vanishing})
 holds if ${\rm deg}(G\vert_W)>\frac{2g-2}{p}$, where $g$ is the genus of $W$ and
 ${\rm char}(k)=p$.
\end{remark}

\begin{remark}
Instead of restricting to $W$ in one step via adjunction in diagram \eqref{diag1}, it is possible to cut down by each $H_i$ individually, and keep track of the sections that extend via vector subspaces similar to the $S^0$ defined in \cite{Schwede}.  In fact, it follows from \eqref{diag3} that a subspace of $S^0(X,\cO_X(\lfloor mD\rfloor +A))$ surjects onto $H^0(W,\cO_X(\lfloor mD\rfloor +A)\vert_W)$ for all $m \geq 1$.
\end{remark}

\begin{remark}
Finally, we remark that it is possible to weaken the hypothesis that $X$ is smooth and $H$ is simple normal crossing to simply that: there exists an open neighborhood $U$ containing $W := H_1 \cap \ldots \cap H_r$ such that $U$ is smooth and $H|_U$ is simple normal crossing.  The proof is unchanged.  In fact, it is possible even to obtain similar statements if $W$ is an $F$-pure center of $(X, H)$.
\end{remark}

We use the method in the proof of Theorem~\ref{p_lifting} to also prove the lower bound on numerical dimension stated in the Introduction.

\noindent{\bf{Theorem \ref{thm_main}.}} {\it Suppose that $X$ is a smooth projective variety over an algebraically closed field $k$ of positive characteristic.  If $D$ is a pseudo-effective
$\RR$-divisor
on $X$ which is not numerically equivalent to the negative part $N_{\sigma}(D)$ in its divisorial Zariski
decomposition, then $\kappa_{\sigma}(D)\geq 1$, that is, there is an ample divisor
$A$ on $X$ and $C>0$ such that
$$h^0(X,\cO_X(\lfloor mD\rfloor+A))\geq Cm\,\,\text{for all}\,\,m\gg 0.$$
}
%\vskip 1pt
\begin{proof}%[Proof of Theorem~\ref{thm_main}]
Let $D=N_{\sigma}(D)+P_{\sigma}(D)$ be the divisorial Zariski decomposition of $D$.
We simply write $N_{\sigma}$ and $P_{\sigma}$ for $N_{\sigma}(D)$ and $P_{\sigma}(D)$,
respectively.
We fix a very ample divisor $H$ on $X$.
By assumption, $P_{\sigma}$ is not numerically trivial, and it is pseudo-effective by
Proposition~\ref{prop_Nakayama}. It follows from Lemma~\ref{lem_num_trivial1}
that $(P_{\sigma}\cdot H^{n-1})>0$, where $n=\dim(X)$.

For each $m\geq 1$, we will consider a curve $W_m$ in $X$,
given as the intersection of general
$(n-1)$ elements in the linear system $|H|$. Note that each such $W_m$ is smooth and connected, of genus
$g=\frac 1 2 ((K_X+(n-1)H)\cdot H^{n-1})+1$. We fix an ample divisor $G$ on $X$ such that
$(G\cdot H^{n-1})>\frac{2g-2}{p}$, where $p={\rm char}(k)$. It follows from
Remark~\ref{curve_case} that for every $W_m$ as above, the vanishing in
(\ref{eq.H1Vanishing}) holds.

Furthermore, arguing as in the proof of Theorem~\ref{p_lifting}, we see that after possibly
replacing $G$ by a multiple, we may assume that $G-\{mP_{\sigma}\}$ is ample for every
$m\geq 1$. Since $\lfloor mP_{\sigma}\rfloor+G=mP_{\sigma}+(G-\{mP_{\sigma}\})$,
it follows from Proposition~\ref{prop_Nakayama} that
$\SB(\lfloor mP_{\sigma}\rfloor+G)$ contains no codimension one subvarieties.
We can therefore choose $W_m$ as above for each $m\geq 1$ such that
$W_m\cap \SB(\lfloor mP_{\sigma}\rfloor+G)=\emptyset$.

It follows from the proof of Theorem~\ref{p_lifting} that if we take $A=K_X+H+2G$
(which may be assumed ample after replacing $G$ by a multiple), then
$$h^0(X,\cO_X(\lfloor mP_{\sigma}\rfloor+A))\geq h^0(W_m,\cO_X(\lfloor mP_{\sigma}\rfloor+A)\vert_{W_m})$$
for all $m\geq 1$.
Since $W_m$ is a smooth curve of genus $g$ and $(P_{\sigma}\cdot H^{n-1})>0$, we deduce from the Riemann-Roch theorem that
$$h^0(X,\cO_X(\lfloor mP_{\sigma}\rfloor+A))\geq ((\lfloor mP_{\sigma}\rfloor+A)\cdot H^{n-1})-g
\geq Cm$$
for a suitable $C>0$ and all $m\gg 0$.
Since $N_{\sigma}$ is effective, the difference $\lceil mD\rceil-\lfloor mP_{\sigma}\rfloor$
is effective, hence
$$h^0(X,\cO_X(\lceil mD\rceil+A))\geq h^0(X,\cO_X(\lfloor mP_{\sigma}\rfloor+A))\geq Cm$$
for $m\gg 0$. As we have seen in the discussion of the definition of numerical dimension in
\S 2, this implies $\kappa_{\sigma}(D)\geq 1$.
\end{proof}

We conclude with an application of Theorem~\ref{p_lifting}, showing that also in positive
characteristic, the definition of numerical dimension that we have been using agrees in the case
of a nef $\RR$-divisor with the definition in \cite{Kawamata}. Suppose that $X$ is a smooth $n$-dimensional projective variety over an algebraically closed field of positive characteristic. If $D$ is a nef
$\RR$-divisor on $X$, let us temporarily denote by $\nu(D)$ the largest $j\geq 0$ such that the
cycle class $D^j$ is not numerically trivial. It follows from Lemma~\ref{lem_num_trivial2}
that if $H$ is an ample divisor on $X$, then $\nu(D)$ is the largest $j$ such that
$(D^j\cdot H^{n-j})\neq 0$.

\begin{proposition}
\label{prop.KawamataNumericalDimensionEquivalent}
If $D$ is a nef $\RR$-divisor on the smooth projective variety $X$ as above, then
$\kappa_{\sigma}(D)=\nu(D)$.
\end{proposition}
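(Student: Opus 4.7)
The plan is to prove the two inequalities $\kappa_\sigma(D)\geq \nu(D)$ and $\kappa_\sigma(D)\leq \nu(D)$ separately; set $\nu:=\nu(D)$ and $n:=\dim X$. The lower bound uses the machinery developed in the paper (specifically Theorem~\ref{p_lifting}), while the upper bound is a characteristic-free argument combining Fujita's vanishing theorem with an asymptotic Riemann--Roch computation.

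For $\kappa_\sigma(D)\geq \nu$, the case $\nu=0$ is immediate from Proposition~\ref{char_pseudoeffective} since a nef divisor is pseudo-effective. When $\nu\geq 1$, fix a very ample divisor $H$ and take general members $H_1,\ldots,H_{n-\nu}\in|H|$; by Bertini, $H_1+\ldots+H_{n-\nu}$ is simple normal crossing and $W:=H_1\cap\ldots\cap H_{n-\nu}$ is smooth and irreducible of dimension $\nu$, not contained in the support of $D$. Since $D$ is nef, $\mathbf B_-(D)=\emptyset$, so the hypotheses of Theorem~\ref{p_lifting} are satisfied and produce an ample divisor $A$ for which the restriction map $H^0(X,\cO_X(\lfloor mD\rfloor+A))\to H^0(W,\cO_X(\lfloor mD\rfloor+A)\vert_W)$ is surjective for every $m\geq 1$. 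By Lemma~\ref{lem_num_trivial2}, $(D\vert_W)^\nu=D^\nu\cdot H_1\cdots H_{n-\nu}>0$, so $D\vert_W$ is nef and big on $W$. Using the remark after Theorem~\ref{p_lifting}, I may freely enlarge $A$ so that $A\vert_W-\{mD\}\vert_W$ has ample class on $W$ for every $m\geq 1$, which is possible because $\{[\{mD\}]:m\geq 1\}$ is a bounded subset of $\NS^1(W)_\RR$. Asymptotic Riemann--Roch for the nef and big divisor $D\vert_W$ then yields $h^0(W,(\lfloor mD\rfloor+A)\vert_W)\geq Cm^\nu$ for some $C>0$ and all $m\gg 0$, and the surjection forces $\kappa_\sigma(D)\geq \nu$.

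For the reverse inequality $\kappa_\sigma(D)\leq \nu$, I will exhibit a single ample $A$ with $h^0(X,\cO_X(\lfloor mD\rfloor+A))=O(m^\nu)$, which suffices by monotonicity (given any ample $A_0$, one can choose $A$ ample with $A-A_0$ linearly equivalent to an effective divisor). Choose $A=m_0H+A'$, where $H$ is ample Cartier, $m_0=m_0(H)$ is the integer produced by Fujita's vanishing theorem for $H$, and $A'$ lies deep enough in the ample cone that $A'-\{mD\}$ has ample class for every $m\geq 1$; this is again possible because $\{[\{mD\}]:m\geq 1\}$ is bounded in $\NS^1(X)_\RR$. Then $A'+\lfloor mD\rfloor$ has nef (in fact ample) class, so Fujita's theorem gives $H^i(X,\cO_X(\lfloor mD\rfloor+A))=0$ for all $i\geq 1$ and all $m\geq 1$, whence $h^0=\chi$. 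Expanding
\begin{equation*}
\chi(X,\cO_X(\lfloor mD\rfloor+A))=\sum_{j=0}^n \frac{(mD+A-\{mD\})^j\cdot\mathrm{td}_{n-j}(X)}{j!}
\end{equation*}
multinomially, every term involving $D^k$ with $k>\nu$ vanishes by definition of $\nu$, and the remaining terms are $O(m^\nu)$ since $[\{mD\}]$ remains bounded; this yields $\chi=O(m^\nu)$ and the required upper bound.

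The main subtlety in both steps is the uniformity in $m$: a single ample divisor $A$ must control all the divisors $\lfloor mD\rfloor+A$ simultaneously. This is made possible precisely because the fractional parts $\{mD\}$ trace out a bounded subset of $\NS^1(X)_\RR$ as $m$ varies, so a sufficiently ample $A$ absorbs all of them at once. Apart from this uniformity point, the $\geq$ direction reduces to asymptotic Riemann--Roch for a nef and big divisor on $W$, while the $\leq$ direction is the standard Fujita-vanishing expansion of the Euler characteristic.
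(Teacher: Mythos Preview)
Your argument is correct. For $\kappa_\sigma(D)\geq\nu(D)$ you follow the paper's proof exactly: both restrict to a general complete intersection $W$ of dimension $\nu$ via Theorem~\ref{p_lifting} and exploit that $D\vert_W$ is nef and big. (The paper arranges $\lfloor mD\rfloor\vert_W=\lfloor mD\vert_W\rfloor$ by replacing $H$ with a multiple so that the $D_i\vert_W$ remain distinct prime divisors, whereas you absorb the fractional parts into $A$; either device works. Note also that the case $\nu=n$ should strictly be handled separately---then $D$ is nef with $(D^n)>0$, hence big, and the bound is immediate---since Theorem~\ref{p_lifting} is stated for $r\geq 1$.)

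For $\kappa_\sigma(D)\leq\nu(D)$ your route is genuinely different from the paper's. The paper argues by induction on $\dim X$: when $D$ is not big one restricts to a general very ample hypersurface $E\in|(\ell+1)A|$, uses $H^0(X,\cO_X(\lfloor mD\rfloor-\ell A))=0$ to embed the sections on $X$ into those on $E$, obtains $\kappa_\sigma(D)\leq\kappa_\sigma(D\vert_E)$, and then applies the inductive hypothesis together with $\nu(D)=\nu(D\vert_E)$. You instead invoke Fujita's vanishing theorem on $X$ to force $h^0=\chi$ uniformly in $m$, and then read off $\chi=O(m^\nu)$ from the Riemann--Roch expansion using the numerical triviality of $D^k$ for $k>\nu$ (which, by Lemma~\ref{lem_num_trivial2} and Grothendieck--Riemann--Roch on the smooth $X$, does kill the pairing with every Todd-class term). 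This is more direct---no induction, a single explicit estimate---and is close in spirit to the alternative proof the paper attributes to Koll\'ar in its closing remark. The paper's inductive argument has the virtue of using only elementary exact sequences for the upper bound; your approach trades the induction for one global application of Fujita and yields a cleaner uniform statement.
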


%\todoK{What do people think of including this in the introduction?}

\begin{proof}
We first show that $\kappa_{\sigma}(D)\geq\nu(D)$. Let $n=\dim(X)$. If $\nu(D)=n$, then
$D$ is big, and in this case it is clear that $\kappa_{\sigma}(D)\geq n$. Suppose now that
$\nu(D)=j<n$, and let $H$ be a very ample divisor on $X$. If  $H_1,\ldots,H_{n-j}$
are general elements in the linear system $|H|$, then $H_1+\ldots+H_{n-j}$ has simple normal crossings.
It follows from Theorem~\ref{p_lifting} that if $W=H_1\cap\ldots\cap H_{n-j}$, then there is an ample divisor $A$ on $X$ such that
$$H^0(X,\cO(\lfloor mD\rfloor+A))\to H^0(W,\cO_X(\lfloor mD\rfloor+A)\vert_W)$$
is surjective for every $m\geq 1$. On the other hand, since $(D^j\cdot H^{n-j})>0$, it follows that
$\cO_X(D)\vert_W$ is big.
Furthermore, after possibly replacing $H$ by a multiple, we may assume that
the following holds: if we write $D=
a_1D_1+\ldots+a_sD_s$, with the $D_i$ distinct prime divisors and $a_i\in \RR$, then also the
$D_i\vert_W$ are distinct
prime divisors. In particular, $D\vert_W$ is well-defined as a divisor and
$\lfloor mD\rfloor\vert_W=\lfloor mD\vert_W\rfloor$.
By putting all these together, we conclude that
 there is $C>0$ such that
$$h^0(X,\cO(\lfloor mD\rfloor+A))\geq h^0(W,\cO_X(\lfloor mD\rfloor+A)\vert_W)
\geq Cm^j$$
for all $m\gg 0$, hence $\kappa_{\sigma}(D)\geq j$.

We prove the reverse inequality by induction on $n$. If $D$ is big, then $\nu(D)=n$,
hence we are done. Otherwise, let $r=\kappa_{\sigma}(D)$, and let $A$ be an ample
divisor on $X$ such that
$$\liminf_{m\to\infty} \frac {h^0(X,\cO_X(\lfloor mD\rfloor+A))}{m^r}>0.$$
If $n=1$, then $r=0$ and there is nothing to prove. Therefore we assume $n\geq 2$.

Let $\ell$ be a positive integer such that $(\ell+1) A$ is very ample, and choose $E$ a general element in the linear system $|(\ell +1)A|$. We have an exact sequence
$$0\to H^0(X,\cO_X(\lfloor mD\rfloor-\ell A))\to H^0(X,\cO_X(\lfloor mD\rfloor+A))
\to H^0(E, \cO_X(\lfloor mD\rfloor+A)\vert_E).$$
Since $D$ is not big, it follows that $H^0(X,\cO_X(\lfloor mD\rfloor-\ell A))=0$ for every $m\geq 1$, and the exact sequence implies
\begin{equation}\label{eq_final}
\liminf_{m\to\infty} \frac {h^0(E,\cO_X(\lfloor mD\rfloor+A)\vert_E)}{m^r}>0.
\end{equation}
Since $E$ is general, we may assume that $E$ is smooth. Furthermore,
arguing as above, we see that we may assume that
$D\vert_E$ is well-defined as a divisor, and
$\lfloor mD\rfloor\vert_E=\lfloor mD\vert_E\rfloor$ for every $m$.
Therefore
(\ref{eq_final}) gives
$\kappa_{\sigma}(D)\leq\kappa_{\sigma}(D\vert_E)$. On the other hand, since $D$ is not big
and $E$ is ample, we have $\nu(D)=\nu(D\vert_E)$, and using the inductive assumption we obtain
$$\kappa_{\sigma}(D)\leq \kappa_{\sigma}(D\vert_E)\leq \nu(D\vert_E)=\nu(D).$$
This completes the proof of the proposition.
\end{proof}

\begin{remark}
In fact, the assertion in Proposition~\ref{prop.KawamataNumericalDimensionEquivalent}
also holds on singular projective varieties. More precisely, suppose that $X$ is a
normal projective variety
over an algebraically closed field of positive characteristic, and let $n=\dim(X)$. If $D$ is a nef
$\RR$-Cartier $\RR$-divisor on $X$, then $\kappa_{\sigma}(D)=\nu(D)$, where if $H$
is an ample Cartier divisor on $X$, we denote by $\nu(D)$ the largest $j$ such that
$(D^j\cdot H^{n-j})\neq 0$. Indeed, let $\pi\colon Y\to X$ be an alteration, with $Y$ smooth.
We have $\kappa_{\sigma}(D)=\kappa_{\sigma}(\pi^*(D))$
by Proposition~\ref{alteration} and $\kappa_{\sigma}(\pi^*(D))=\nu(\pi^*(D))$ by
Proposition~\ref{prop.KawamataNumericalDimensionEquivalent}. Therefore it is enough to show that $\nu(D)=\nu(\pi^*(D))$.

Let $r=\nu(D)$.
Note first that $(\pi^*(D)^{r}\cdot \pi^*(H)^{n-r})={\rm deg}(\pi)(D^r\cdot H^{n-r})\neq 0$, hence
$\pi^*(D)^{r}$ is not numerically trivial. We similarly obtain
$(\pi^*(D)^{r+1}\cdot \pi^*(H)^{n-r-1})=0$. Note that $\pi^*(H)$ is nef and big. In particular, we can write $\pi^*(H)=A+E$, for $\RR$-divisors $A$ and $E$ on $X$, with $A$ ample and $E$
effective. Since $\pi^*(H)$ and $\pi^*(D)$ are nef, we have
$$(\pi^*(D)^{r+1}\cdot A^{n-r-1})\leq (\pi^*(D)^{r+1}\cdot \pi^*(H)^{n-r-1})=0.$$
It follows from Proposition~\ref{lem_num_trivial2} that $\nu(\pi^*(D))=r$, which completes the proof of our assertion.
\end{remark}

\begin{remark}
J.~Koll\'{a}r pointed out to us that one can give a proof of
Proposition~\ref{prop.KawamataNumericalDimensionEquivalent} using Fujita's vanishing theorem and Matsusaka's results on variable intersection cycles
\cite{Matsusaka}. His argument, in fact, works directly on arbitrary normal varieties.
\end{remark}

\providecommand{\bysame}{\leavevmode \hbox \o3em
{\hrulefill}\thinspace}

\end{document}